    \documentclass[oneside,english]{amsart}
    \usepackage[T1]{fontenc}
    \usepackage[latin9]{inputenc}
    \usepackage{geometry}
    \geometry{verbose,tmargin=3cm,bmargin=3cm,lmargin=3cm,rmargin=3cm}
    \usepackage{verbatim}
    \usepackage{amstext}
    \usepackage{amsthm}
    \usepackage{hyperref}
    \usepackage{mathtools}
    \usepackage{graphicx}
    \usepackage{float}
    \usepackage{algorithm}
    \usepackage[noend]{algpseudocode}
    \usepackage{lipsum}
    \usepackage{xcolor}
    
    \makeatletter
    \numberwithin{equation}{section}
    \numberwithin{figure}{section}
    \theoremstyle{plain}
    \newtheorem{thm}{\protect\theoremname}
      \theoremstyle{plain}
      \newtheorem{lem}[thm]{\protect\lemmaname}
      \theoremstyle{plain}
      \newtheorem{cor}[thm]{\protect\corname}
      \theoremstyle{remark}
      
      \theoremstyle{plain}
      \newtheorem{conj}[thm]{\protect\conjname}

    \newcommand{\C}{\mathcal{C}}
    \newcommand{\F}{\mathcal{F}}
    \DeclarePairedDelimiter{\ceil}{\lceil}{\rceil}
    \DeclarePairedDelimiter{\floor}{\lfloor}{\rfloor}

    \makeatother
    
    \algnewcommand\algorithmicto{\textbf{to}}
    \algnewcommand\To{\algorithmicto{} }
    \algnewcommand\algorithmicswitch{\textbf{switch}}
    \algnewcommand\algorithmiccase{\textbf{case}}
    \algdef{SE}[SWITCH]{Switch}{EndSwitch}[1]{\algorithmicswitch\ #1\ \algorithmicdo}{\algorithmicend\ \algorithmicswitch}%
    \algdef{SE}[CASE]{Case}{EndCase}[1]{\algorithmiccase\ #1}{\algorithmicend\ \algorithmiccase}%
    \algtext*{EndSwitch}%
    \algtext*{EndCase}%
    
    \usepackage{babel}
      \providecommand{\lemmaname}{Lemma}
      \providecommand{\corname}{Corollary}
      \providecommand{\remarkname}{Remark}
      \providecommand{\conjname}{Conjecture}
    \providecommand{\theoremname}{Theorem}

    \newcommand{\blfootnote}[1]{{\renewcommand{\thefootnote}{\roman{footnote}}\footnotetext[0]{#1}}}
    
    \begin{document}
    
    \title{The Toughness of Kneser Graphs}
    
 
     \author{Davin Park${}^{1\dag}$, Anthony Ostuni${}^{1\dag}$, Nathan Hayes${}^{1}$, Amartya Banerjee${}^{1}$, Tanay Wakhare${}^{2}$, Wiseley Wong${}^{1 \ast}$,  and Sebastian Cioab\u{a}${}^{3}$}
 
    \begin{abstract}
    The $\textit{toughness}$ $t(G)$ of a graph $G$ is a measure of its connectivity that is closely related to Hamiltonicity. 
    Xiaofeng Gu, confirming a longstanding conjecture of Brouwer, recently proved the lower bound $t(G) \ge \ell / \lambda - 1$ on the toughness of any connected $\ell$-regular graph, where $ \lambda$ is the largest nontrivial absolute eigenvalue of the adjacency matrix. Brouwer had also observed that many families of graphs (in particular, those achieving equality in the Hoffman ratio bound for the independence number) have toughness exactly $\ell / \lambda$. Cioab\u{a} and Wong confirmed Brouwer's observation for several families of graphs, including Kneser graphs $K(n,2)$ and their complements, with the exception of the Petersen graph $K(5,2)$. In this paper, we extend these results and determine the toughness of Kneser graphs $K(n,k)$ when $k\in \{3,4\}$ and $n\geq 2k+1$ as well as for $k\geq 5$ and sufficiently large $n$ (in terms of $k$). In all these cases, the toughness is attained by the complement of a maximum independent set and we conjecture that this is the case for any $k\geq 5$ and $n\geq 2k+1$.  
    \end{abstract}
    
    \maketitle
    
    
    \section{Introduction}
    
    \blfootnote{${}^{\dag}$ denotes joint first authorship, with the current ordering arbitrary}
    \blfootnote{${}^{\ast}$ denotes corresponding author, \email{wwong123@umd.edu}}
    \blfootnote{$^{1}$~University of Maryland, College Park, MD 20878, USA}
    \blfootnote{$^{2}$~Massachusetts Institute of Technology, Department of EECS, Cambridge, MA 02139, USA}
    \blfootnote{$^{3}$~University of Delaware, Department of Mathematical Sciences, Newark, DE 19716-2553, USA}
    
    Let $n$ and $k$ be two natural numbers such that $n\geq 2k+1$. The vertex set $\binom{[n]}{k}$ of the Kneser graph $K(n,k)$ consists of the $k$-subsets of $[n]=\{1,2,\ldots,n\}$. Two vertices $A$ and $B$ are adjacent if and only if $A\cap B = \emptyset$. The study of Kneser graphs is intertwined with the combinatorial study of intersecting sets. An \textit{intersecting family} $\F\subseteq \binom{[n]}{k}$ satisfies $F_1\cap F_2\ne \emptyset$ for any $F_1,F_2\in \F$ and corresponds to an independent set in $K(n,k)$.  The famous Erd\"os-Ko-Rado theorem \cite{erdos} states that the independence number of $K(n,k)$ is $\binom{n-1}{k-1}$ and that any independent set of maximum size must consist of the $k$-subsets containing some given element of $[n]$. This theorem is widely seen as a cornerstone of extremal combinatorics and has connections to other areas of mathematics including representation theory, algebraic combinatorics, and spectral graph theory \cite{BollobasComb,GodsilMeagher}. Lov\'{a}sz \cite{MR514625} used topological methods to prove that the  chromatic number of Kneser graphs is $n-2k+2$ for $n\ge 2k$ (see also \cite{MR514626, greene2002new,matouvsek2004combinatorial}). There are also several interesting results proving that the Kneser graphs and bipartite Kneser graphs are Hamiltonian in certain ranges of parameters \cite{Chen, MR3826304,MR3759914}, but this problem is not completely solved for all parameters $n$ and $k$.
    
    
    

    

    In this paper, we investigate the \textit{toughness} of Kneser graphs, which is a measure of graph connectivity. The toughness of a connected graph $G$ is defined as
    $$t(G)= \min_{S} \frac{|S|}{c(G\setminus S)}, $$
    where $S$ ranges over all vertex cuts of $G$, and $c(G\setminus S)$ denotes the number of components remaining in $G$ after deleting $S$. We will use $G\setminus S$ to denote both the resulting graph and its corresponding vertex set. No confusion will arise. A graph $G$ is called \emph{$t$-tough} if $t(G)\ge t$. 
    
    Chv\'atal \cite{chvatal} introduced the study of toughness in connection with the cycle structure of a graph. He observed that every Hamiltonian graph is $1$-tough and conjectured that there exists some $t$ such that any $t$-tough graph is Hamiltonian. Bauer, Broersma and Veldman  \cite{notEveryTwotough} later showed that if such a $t$ exists, then it must be at least $\frac{9}{4}$. Though Chv\'atal's conjecture remains open, it has spurred significant work on the close connection between Hamiltonicity and toughness \cite{survey}. 

    Let $\ell=\lambda_1> \lambda_2\ge \cdots \ge \lambda_N$ denote the eigenvalues of the adjacency matrix of a connected $\ell$-regular graph $G$ on $N$ vertices.  Brouwer~\cite{MR1344566} (see also Alon \cite{AlonTough} for related results) showed that if $\lambda = \max \{|\lambda_2|, |\lambda_N|  \}$, then 
    $$t(G) > \frac{\ell}{\lambda}-2.$$
    This eigenvalue bound led Alon \cite{AlonTough} to disprove a conjecture of Chv\'{a}tal that a graph of sufficiently large toughness must be pancyclic (contain a cycle of every length). Brouwer \cite{Brouwer96} conjectured that the bound above can be improved to $t(G) \geq \ell / \lambda-1$, which was recently verified by Gu \cite{gu2020proof}. He also stated his belief that many interesting $\ell$-regular graphs satisfy $t(G) = \ell / \lambda$. We explain the intuition behind this assertion below. The following result is known as the Hoffman ratio bound for the independence number of an $\ell$-regular graph (see Chapter 9 of \cite{GodsilMeagher} or \cite{hoffmanratiobound} for more information).
    \begin{thm} [Hoffman \cite{hoffmanratiobound}]
    Let $G$ be a connected $\ell$-regular graph on $N$ vertices with eigenvalues $\ell=\lambda_1> \lambda_2\ge \cdots \ge \lambda_N$. If $\alpha(G)$ denotes the independence number of $G$, then
    $$
    \alpha(G)\leq \frac{N|\lambda_N|}{\ell+|\lambda_{N}|}.
    $$
    \end{thm}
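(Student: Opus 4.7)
The plan is to use a standard eigenvalue/quadratic form argument. Let $A$ be the adjacency matrix of $G$. Since $G$ is connected and $\ell$-regular, the all-ones vector $\mathbf{1}$ is an eigenvector for the eigenvalue $\ell=\lambda_1$, and the remaining eigenspaces are orthogonal to $\mathbf{1}$. Let $S$ be a maximum independent set, $\alpha=|S|=\alpha(G)$, and let $\mathbf{x}=\mathbf{1}_S$ be its characteristic vector. The first step is to decompose $\mathbf{x}$ as $\mathbf{x}=\tfrac{\alpha}{N}\mathbf{1}+\mathbf{y}$, where $\mathbf{y}\perp \mathbf{1}$, so $\|\mathbf{y}\|^2=\|\mathbf{x}\|^2-\tfrac{\alpha^2}{N}=\alpha-\tfrac{\alpha^2}{N}$.

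Next, I would exploit the two features of $\mathbf{x}$: because $S$ is independent, $\mathbf{x}^T A\mathbf{x}=0$, and because $\mathbf{y}\perp \mathbf{1}$ the Rayleigh quotient of $A$ on the invariant subspace $\mathbf{1}^\perp$ is bounded below by $\lambda_N$, giving $\mathbf{y}^T A\mathbf{y}\geq \lambda_N\|\mathbf{y}\|^2$. Expanding $\mathbf{x}^T A\mathbf{x}$ using the decomposition and $A\mathbf{1}=\ell\mathbf{1}$ yields
\[
0=\mathbf{x}^T A\mathbf{x}=\frac{\alpha^2\ell}{N}+\mathbf{y}^T A\mathbf{y}\geq \frac{\alpha^2\ell}{N}+\lambda_N\left(\alpha-\frac{\alpha^2}{N}\right).
\]

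Finally, I would note that $G$ has edges (otherwise the bound is trivial), so $\lambda_N<0$ and $|\lambda_N|=-\lambda_N$. Dividing the displayed inequality by $\alpha>0$ and rearranging gives $\alpha(\ell+|\lambda_N|)\leq N|\lambda_N|$, which is exactly the Hoffman bound. The only mild subtlety is checking the sign of $\lambda_N$ and that the decomposition is valid; the main ``obstacle'' is really just choosing the right test vector $\mathbf{x}=\mathbf{1}_S$, after which the argument is a short Rayleigh-quotient computation. There is no heavy combinatorial content, which is consistent with the bound being purely spectral.
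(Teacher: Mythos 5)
Your proof is correct: the decomposition $\mathbf{x}=\tfrac{\alpha}{N}\mathbf{1}+\mathbf{y}$ with $\mathbf{y}\perp\mathbf{1}$, the vanishing of $\mathbf{x}^{T}A\mathbf{x}$ for an independent set, and the Rayleigh bound $\mathbf{y}^{T}A\mathbf{y}\ge\lambda_N\|\mathbf{y}\|^{2}$ combine exactly as you say to give $\alpha(\ell+|\lambda_N|)\le N|\lambda_N|$, with the sign issue ($\lambda_N<0$ once $G$ has an edge) handled properly. The paper states this theorem without proof, citing Hoffman and the references on the ratio bound, and your argument is precisely the standard spectral proof appearing there, so it matches the intended approach.
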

    Since $|\lambda_N|\leq \lambda$, the Hoffman ratio bound implies that $\alpha(G)\leq \frac{N\lambda}{\ell+\lambda}.$ If equality occurs and $Q$ is an independent set of maximum size, then the complement of $Q$ is a vertex cut in $G$ whose removal creates $\frac{N\lambda}{\ell+\lambda}$ singletons. A simple calculation yields that $t(G)\leq \ell/\lambda$.  Cioab\u{a} and Wong \cite{MR3240840} confirmed Brouwer's intuition for several classes of regular graphs attaining equality in the Hoffman ratio bound including complements of point graphs of generalized quadrangles, lattice graphs ($2$-dimensional Hamming graphs), Kneser graphs $K(n,2)$ and their complements,  with the sole exception of the Petersen graph $K(5,2)$.
    
    In this paper, we extend these results and determine $t(K(n,3))$ for every $n\geq 7$ and $t(K(n,4))$ for every $n\geq 9$. In the case of Kneser graphs $K(n,k)$, $\ell/\lambda=n/k-1$ (see Theorem \ref{knesereigenvalue}).
    \begin{thm}\label{smallk}
    Let $k\in \{3,4\}$. The toughness of the Kneser graph $K(n,k)$ equals $$t(K(n,k)) = \frac{n}{k} - 1,$$ for any $n\geq 2k+1$. Moreover, any subset of vertices $S$ satisfying $t(K(n,k))=\frac{|S|}{c(K(n,k)\setminus S)}$ must be the complement of a maximum independent set in $K(n,k)$.
    \end{thm}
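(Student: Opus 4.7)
The plan is to prove matching upper and lower bounds while tracking the equality case throughout. The upper bound is immediate from the Hoffman discussion preceding the theorem: for any $x\in[n]$, the star $S_x=\{A\in V(K(n,k)):x\in A\}$ is a maximum independent set of size $\binom{n-1}{k-1}$, so its complement is a vertex cut of size $\binom{n-1}{k}$ whose removal leaves $\binom{n-1}{k-1}$ isolated vertices, giving $t(K(n,k))\leq \binom{n-1}{k}/\binom{n-1}{k-1} = (n-k)/k$.

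For the lower bound, let $S$ be an arbitrary vertex cut and write the components of $K(n,k)\setminus S$ as $C_1,\ldots,C_c$. The first step is to split into cases based on whether every $C_i$ is a singleton. In the all-singleton case, $V\setminus S$ is itself an independent set of size $c$, so the Erd\H{o}s--Ko--Rado theorem gives $c \leq \binom{n-1}{k-1}$, and a short computation yields $|S|/c \geq (n-k)/k$ with equality if and only if $V\setminus S$ is a star (which is the desired extremal configuration).

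The main obstacle is the non-singleton case, in which some component $T$ contains two disjoint $k$-sets $A$ and $B$. The key local observation is that for each component $C_i$, every neighbor of every vertex of $C_i$ must lie in $S\cup C_i$. Taking $A,B\in T$ disjoint gives $N(A)\cup N(B)\subseteq S\cup T$ and hence $|S|+|T|\geq 2\binom{n-k}{k}-\binom{n-2k}{k}+2$, a substantially stronger estimate than the bound $|S|+|C_i|\geq \binom{n-k}{k}+1$ arising from a single vertex. I plan to combine this with a cross-intersecting bound on $\sum_i|C_i|$ (since vertices from different components must pairwise intersect, the $C_i$ form a pairwise cross-intersecting family) to show that the presence of any non-trivial component strictly worsens the ratio $|S|/c$.

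The hardest step will be translating these local estimates into a sharp global inequality $|S|/c \geq (n-k)/k$. This will require controlling how many singleton components can coexist with each non-trivial component, applying Hilton--Milner-type stability to cap the size of any non-star component, and performing case analysis on the structure of each non-trivial $T$ (such as on the number of elements of $[n]$ it spans). I expect the values $k\in\{3,4\}$ to be small enough for this case analysis to close sharply for every $n\geq 2k+1$, which is why these cases are handled in full generality, whereas larger $k$ requires the ``sufficiently large $n$'' hypothesis alluded to in the abstract.
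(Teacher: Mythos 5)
Your upper bound and your all-singleton case are correct and match the paper's setup: if every component is a singleton then $V\setminus S$ is an intersecting family, Erd\H{o}s--Ko--Rado gives $c\le\binom{n-1}{k-1}$, and $|S|/c=\bigl(\binom{n}{k}-c\bigr)/c\ge n/k-1$ with equality forcing a star. The paper's Theorem \ref{TK} does exactly this for the regime of large $c$ (using a Hilton--Milner switching lemma to rule out non-singleton components there), and also handles very small $c$ via the vertex-connectivity $\binom{n-k}{k}$.

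The genuine gap is the non-singleton case with $c$ in the middle range, roughly $\binom{n-k-1}{k-1}+1\le c\le\binom{n-1}{k-1}-\binom{n-k-1}{k-1}+1$, which is nonempty precisely for the small $n$ that make $k\in\{3,4\}$ hard. Your local estimate $|S|+|T|\ge 2\binom{n-k}{k}-\binom{n-2k}{k}+2$ is correct but cannot close this case: it bounds $|S|+|T|$, and $|T|$ may absorb the entire right-hand side (consider one large component plus many singletons), so it yields no lower bound on $|S|$ growing with $c$. What is needed, and what your plan omits, is a bound on $|S|$ that is \emph{quadratic} in the sizes of the two sides it separates. The paper gets this from the Haemers/Helmberg--Mohar--Poljak--Rendl spectral bound (Theorem \ref{HMP}) after using Lemma \ref{minlemma} to group the $c$ components into two parts each of size at least $c-1$; this is the engine that kills $8\le n\le 11$ for $k=3$ and $n\ge 11$ for $k=4$. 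For the remaining cases the paper additionally needs Kruskal--Katona lower bounds on $|N(Q)|$ for the set $Q$ of singleton components (Lemma \ref{KKBounds}), a diversity-based switching lemma built on the Han--Kohayakawa theorem (Lemma \ref{switch2}), girth-based edge counting per component, and, for $K(7,3)$, $K(9,4)$, $K(10,4)$, an explicit computer search over component-size partitions. Your proposed route --- cross-intersecting bounds on $\sum_i|C_i|$ plus Hilton--Milner stability plus structural case analysis on $T$ --- is stated only as a plan, and the fact that the authors could not avoid computer verification even with the spectral machinery is strong evidence that this case analysis will not close by hand. As written, the hardest part of the theorem is not proved.
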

    
    We also prove that for  $k\geq 5$, $t(K(n,k))=n/k-1$ for $n$ sufficiently large as a function of $k$.
    \begin{thm}{\label{quad}}
    Let $k\geq 5$ be a natural number. If $$n \geq \frac{2}{\ln 2} k^2 + \left(2 - \frac{3}{\ln 2}\right) k + \frac{1}{\ln 2} \geq \frac{2^{\frac{1}{k-1}}(2k)-1}{2^{\frac{1}{k-1}}-1},$$ then
    $$t(K(n,k)) = \frac{n}{k} -1.$$
    Any subset of vertices $S$ such that $t(K(n,k))=\frac{|S|}{c(K(n,k)\setminus S)}$ must be the complement of a maximum independent set in $K(n,k)$.
    \end{thm}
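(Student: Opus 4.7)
My plan is to prove the theorem in two parts: an explicit construction for the upper bound $t(K(n,k)) \le (n-k)/k$, and a case analysis for the matching lower bound, paralleling the Cioab\u{a}--Wong approach for $k = 2$ but using Hilton--Milner-style bounds adapted to general $k$.

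For the upper bound, I would take $S^* = \binom{[n]}{k} \setminus S_v$, where $S_v = \{F : v \in F\}$ is the star through some $v \in [n]$. Since $S_v$ is a maximum intersecting family of size $\binom{n-1}{k-1}$, removing $S^*$ leaves $\binom{n-1}{k-1}$ isolated singletons, giving $|S^*|/c = (n-k)/k$.

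For the lower bound, let $S$ be any vertex cut, with components $C_1, \ldots, C_c$ of $K(n,k) \setminus S$ and $\bar{S} := V \setminus S$. The goal is to prove $k|\bar{S}| + (n-k) c \le k\binom{n}{k} = n\binom{n-1}{k-1}$, with equality forcing the structure of Case~1 below.

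\emph{Case 1:} Every $C_i$ is a singleton, so $\bar{S}$ is an intersecting family. By Erd\"os--Ko--Rado, $c = |\bar{S}| \le \binom{n-1}{k-1}$, and the inequality holds with equality exactly when $\bar{S}$ is a star.

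\emph{Case 2:} Some $C_1$ has at least two vertices, and hence contains a disjoint pair $A, B$ (by connectedness in $K(n,k)$). Every vertex of $C_j$ with $j \ge 2$ must intersect both $A$ and $B$, so $C_j \subseteq \mathcal{D} := \{F : F\cap A \ne \emptyset,\ F\cap B \ne \emptyset\}$. Picking a representative $r_j \in C_j$ for $j \ge 2$, the family $\{A, r_2, \ldots, r_c\}$ is pairwise intersecting. Whether it is a star (with common $v \in A$, forcing each $r_j$ to contain $v$ and intersect $B$) or not (where Hilton--Milner applies), I would obtain
\[
c \le \binom{n-1}{k-1} - \binom{n-k-1}{k-1} + 1.
\]
For $|\bar S|$, I would exploit the cross-intersection between $C_1$ and $T := \bar{S} \setminus C_1$. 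In the extremal subcase where $T$ attains the maximum size as a star through some $v$ in $\mathcal D$, a direct analysis (using $n \ge 2k+1$) shows that $C_1$ is contained in the $B$-component of $K(n,k)[S_v \cup \{B\}]$, giving $|C_1| \le \binom{n-k-1}{k-1} + 1$ and hence $|\bar S| \le \binom{n-1}{k-1} + 1$. Substituting yields
\[
k|\bar S| + (n-k)c \le n\binom{n-1}{k-1} + n - k\binom{n-k}{k},
\]
which is strictly less than $n\binom{n-1}{k-1}$ since $k\binom{n-k}{k} > n$ for $k \ge 2$, $n \ge 2k+1$.

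For the remaining subcases of Case~2 (where $T$ is sub-maximal and $|C_1|$ correspondingly larger), a careful tradeoff argument is needed. The quadratic hypothesis on $n$ enters precisely here: it is equivalent to $((n-1)/(n-2k))^{k-1} \le 2$, which implies $\binom{n-1}{k-1} \le 2\binom{n-k-1}{k-1}$, providing enough slack to close these subcases. The uniqueness claim follows because the target inequality is strict throughout Case~2, so any extremal cut must lie in Case~1, forcing $\bar S$ to be a maximum independent set.

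\textbf{Main obstacle.} The hard step will be the non-extremal subcases of Case~2, where the sizes $|C_1|$, $|T|$, and $c$ are interlinked via multiple constraints and must be balanced simultaneously; the quadratic lower bound on $n$ is essential for this balancing to succeed, and additional symmetric arguments will be needed to handle configurations where several components are non-singleton (each then provides its own disjoint pair, forcing all such components to lie in the very small set $\mathcal D$).
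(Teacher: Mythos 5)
Your upper bound construction and your Case 1 are fine, and your bound $c \le \binom{n-1}{k-1}-\binom{n-k-1}{k-1}+1$ in Case 2 (via Hilton--Milner applied to $\{A, r_2,\ldots,r_c\}$) is essentially the paper's Lemma \ref{switch}, used the same way in Theorem \ref{TK}. The genuine gap is in how you propose to finish Case 2. Your plan is to upper-bound $|\bar S|$ by treating $C_1$ and $T=\bar S\setminus C_1$ as interacting intersecting families, but no bound of the shape $|\bar S|\le \binom{n-1}{k-1}+1$ can hold in general in Case 2: take $S=N(u)$ for a single vertex $u$, which is a vertex cut with one huge component, so that $|\bar S|=\binom{n}{k}-\binom{n-k}{k}$, far larger than $\binom{n-1}{k-1}+1$. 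The ``non-extremal subcases'' you defer to a ``careful tradeoff argument'' are therefore not a technical loose end but the entire difficulty: when $C_1$ is large and $T$ is small, intersecting-family size restrictions on $\bar S$ give you nothing, and what is needed instead is a \emph{lower} bound on $|S|$ itself.

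The paper closes exactly this case differently. If some component is not a singleton, then (your Case 2 bound plus the hypothesis on $n$, which gives $2\binom{n-k-1}{k-1}>\binom{n-1}{k-1}$ and hence $\binom{n-1}{k-1}-\binom{n-k-1}{k-1}+1\le\binom{n-k-1}{k-1}$) one has $c\le \binom{n-k-1}{k-1}$, and then the vertex connectivity of $K(n,k)$, namely $\kappa=\binom{n-k}{k}$ with the only minimum cuts being vertex neighborhoods, does the work: if $|S|>\binom{n-k}{k}$ then $\frac{|S|}{c}>\frac{\binom{n-k}{k}}{\binom{n-k-1}{k-1}}=\frac{n-k}{k}$, and if $|S|=\binom{n-k}{k}$ then $S=N(u)$, Lemma \ref{discon} gives $c=2$, and $\frac{|S|}{c}=\frac{1}{2}\binom{n-k}{k}>\frac{n}{k}-1$; in both cases the inequality is strict, which is also what delivers the uniqueness statement (extremal cuts must fall in your Case 1). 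So the role of the quadratic hypothesis is to make the ``small $c$'' regime (handled by connectivity) and the ``all singletons'' regime (handled by EKR) cover all values of $c$; it is not, as your sketch suggests, slack for balancing $|C_1|$, $|T|$, and $c$ against one another. To repair your write-up, replace the cross-intersection tradeoff by the connectivity bound (or a spectral lower bound on $|S|$ such as Theorem \ref{HMP}); as it stands, the key step of Case 2 is asserted rather than proved, and the route you indicate for it would fail.
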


    
    

    
    We believe that the toughness of Kneser graphs is exactly ${\ell}/{\lambda}=n/k-1$. Using the explicit expression for the spectrum of Kneser graphs from Theorem \ref{knesereigenvalue}, we obtain the following conjecture.
    \begin{conj}
    Let $k\geq 5$ and $n\geq 2k+1$. The Kneser graph $K(n,k)$ has toughness
    $$t(K(n,k)) =  \frac{n}{k} -1.$$
    If $S$ is a subset of vertices of $K(n,k)$ such that  $t(K(n,k))=\frac{|S|}{c(K(n,k)\setminus S)}$, then $S$ is the complement of a maximum independent set in $K(n,k)$.
    \end{conj}

    In Section \ref{sec2} we will collect some important spectral bounds on vertex cuts and results on extremal intersecting families. In Section \ref{sec3} we will prove Theorem \ref{quad}, and in Sections \ref{sec4} and \ref{sec5} we will prove Theorem \ref{smallk}. Sections \ref{sec3} and \ref{sec4} follow from an author's Ph.D. thesis \cite{wongthesis}. Recent developments have allowed us to shorten and improve the presentation.
    
    
    \section{Tools}\label{sec2}
    
    For two disjoint subsets $S,T$ of the vertices of a graph $G$, we denote by $e(S,T)$ to be the number of edges with one endpoint in $S$ and the other in $T$. Also, we use $e(S)$ for the number of edges with both endpoints in $S$. Determining the toughness for small $n$ values will require a mix of theoretical and computational arguments, such as  counting $e(S,K(n,k)\setminus S)$ in two ways. Classic results on intersecting families provide an upper bound on $c(K(n,k)\setminus S)$ and thus $|S|$.
    We then apply a spectral lower bound on the size of a vertex cut to either obtain a contradiction or restrict the possible values of $|S|$, which will be eliminated through other techniques.

    \subsection{Spectral bounds}
    We will frequently use the spectrum of the adjacency matrix of the Kneser graph.
    
    \begin{thm}\cite[Ch. 9]{AGT}{\label{knesereigenvalue}}
    The eigenvalues of the adjacency matrix of $K(n,k)$ are $(-1)^{j}\binom{n-k-j}{k-j}$ with multiplicities $\binom{n}{j}-\binom{n}{j-1}$ for $j=0, ..., k$.
    \end{thm}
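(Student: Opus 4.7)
The plan is to exploit the natural $S_n$-action on $\binom{[n]}{k}$, under which the adjacency matrix $A$ of $K(n,k)$ is equivariant. Since $A$ commutes with this action, Schur's lemma forces it to act as a scalar on each $S_n$-irreducible summand of the permutation module $\mathbb{C}^{\binom{[n]}{k}}$. Young's rule gives the multiplicity-free decomposition
\[
\mathbb{C}^{\binom{[n]}{k}} \;\cong\; \bigoplus_{j=0}^{k} S^{(n-j,\,j)},
\]
where $S^{(n-j,j)}$ is the irreducible Specht module of shape $(n-j,j)$. The hook length formula gives $\dim S^{(n-j,j)} = \binom{n}{j}-\binom{n}{j-1}$, matching the claimed multiplicities.

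To compute the scalar $\lambda_j$ by which $A$ acts on $S^{(n-j,j)}$, I would apply $A$ to an explicit eigenvector. Fix $j$ disjoint pairs $\{a_1,b_1\},\ldots,\{a_j,b_j\}\subseteq [n]$ and set
\[
v \;=\; \sum_{S \in \binom{[n]}{k}} \prod_{i=1}^{j}\bigl(\mathbf{1}_{a_i\in S}-\mathbf{1}_{b_i\in S}\bigr)\, e_S.
\]
Expanding the product rewrites $v$ as a signed sum of ``star'' vectors, identifying it as the image of a polytabloid; in particular $v\in S^{(n-j,j)}$. Computing the coefficient of $e_T$ in $Av$ for a fixed $k$-subset $T$ becomes a signed count of $k$-subsets $S$ disjoint from $T$, organized by the intersection patterns $|S\cap\{a_i,b_i\}|$. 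Independent inclusion--exclusion cancellations in each of the $j$ pairs kill every contribution except those in which $S$ misses all $2j$ distinguished elements, leaving $(-1)^j\binom{n-k-j}{k-j}$ times the coefficient of $e_T$ in $v$.

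The main obstacle is the bookkeeping in this final cancellation, although it is largely mechanical once one observes that the pairs act independently. A more conceptual shortcut, which avoids constructing Specht modules by hand, is to view $K(n,k)$ as the distance-$k$ graph of the Johnson scheme $J(n,k)$ and invoke the corresponding Eberlein polynomial: its defining triple-binomial sum collapses in the distance-$k$ case to a single term, since the constraints $h\le j$ (from $\binom{j}{h}$) and $h\ge j$ (from $\binom{k-j}{k-h}$) force $h=j$, producing exactly $(-1)^j\binom{n-k-j}{k-j}$.
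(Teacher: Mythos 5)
The paper offers no proof to compare against: Theorem \ref{knesereigenvalue} is quoted from \cite[Ch.~9]{AGT} and used as a black box. Judged on its own, your representation-theoretic route is the standard argument and is essentially sound: the adjacency matrix commutes with the $S_n$-action, Young's rule gives the multiplicity-free decomposition $\bigoplus_{j=0}^{k}S^{(n-j,j)}$ (valid since $j\le k\le n/2$), the dimension count $\binom{n}{j}-\binom{n}{j-1}$ is correct, and your vector $v$ is indeed the image of a polytabloid of shape $(n-j,j)$ under the equivariant ``star'' map $M^{(n-j,j)}\to M^{(n-k,k)}$, hence lies in the unique $S^{(n-j,j)}$-isotypic component (and is nonzero because $n\ge k+j$). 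Your Eberlein-polynomial shortcut is also arithmetically correct, though it merely trades one citation for another of comparable depth. For completeness you should add that the isotypic dimensions telescope to $\binom{n}{k}$, so the listed multiplicities are exact and exhaustive.

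One step of your eigenvalue computation is misstated, and it is precisely the bookkeeping you deferred. Take $T$ in the support of $v$, say $a_i\in T$ and $b_i\notin T$ for all $i$, so $e_T$ has coefficient $1$ in $v$. For $S$ disjoint from $T$ each factor $\mathbf{1}_{a_i\in S}-\mathbf{1}_{b_i\in S}$ reduces to $-\mathbf{1}_{b_i\in S}$, so the terms that survive are exactly those $S$ containing \emph{all} of $b_1,\dots,b_j$, each with sign $(-1)^j$ --- not, as you wrote, those $S$ missing all $2j$ distinguished elements; such an $S$ has coefficient $0$ in $v$ and contributes nothing. Counting the $\binom{n-k-j}{k-j}$ ways to complete such an $S$ outside $T\cup\{b_1,\dots,b_j\}$ gives $(Av)_T=(-1)^j\binom{n-k-j}{k-j}\,v_T$, which is the claimed eigenvalue; so the slip is in the description of which terms cancel, not in the conclusion, and the argument goes through once corrected.
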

    
    By definition, the Laplacian eigenvalues of $K(n,k)$ can be calculated by subtracting the above eigenvalues from the degree of regularity $\binom{n-k}{k}$. The following provides a bound on $e(S,K(n,k)\setminus S)$.
    
    \begin{lem}[Mohar \cite{mohar1997some}]{\label{laplacian}}
    Let $G$ be a connected graph of order $n$ and $T$ be a subset of vertices of $G$. Let $0=\mu_1<\mu_2\le ...\le \mu_n$ denote the eigenvalues of the Laplacian matrix of $G$. Then
    $$\frac{\mu_2|T|(n-|T|)}{n} \leq e(T, G\setminus T) \leq \frac{\mu_{n}|T|(n-|T|)}{n}.$$
    \end{lem}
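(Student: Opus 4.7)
The plan is to apply the variational (Courant--Fischer) characterization of Laplacian eigenvalues to a suitably centered indicator vector of $T$. Let $L$ denote the Laplacian of $G$, and let $\chi_T \in \{0,1\}^n$ be the characteristic vector of $T$. First I would record the well-known quadratic form identity
$$\chi_T^{\top} L \chi_T \;=\; \sum_{uv \in E(G)} (\chi_T(u)-\chi_T(v))^2 \;=\; e(T,G\setminus T),$$
since an edge contributes $1$ to the sum exactly when it has one endpoint in $T$ and one outside.

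Next, because the all-ones vector $\mathbf{1}$ is the eigenvector of $L$ associated with $\mu_1 = 0$ (as $G$ is connected, this eigenspace is one-dimensional), I would project $\chi_T$ onto $\mathbf{1}^{\perp}$ by setting $x := \chi_T - \tfrac{|T|}{n}\mathbf{1}$. Since $L\mathbf{1}=0$, we have $x^{\top} L x = \chi_T^{\top} L \chi_T = e(T,G\setminus T)$. A direct computation then gives
$$\|x\|^2 \;=\; |T|\!\left(1-\tfrac{|T|}{n}\right)^{2} + (n-|T|)\!\left(\tfrac{|T|}{n}\right)^{2} \;=\; \tfrac{|T|(n-|T|)}{n}.$$

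Finally, since $x \perp \mathbf{1}$, the Courant--Fischer min-max theorem applied to the symmetric matrix $L$ on the $(n-1)$-dimensional subspace $\mathbf{1}^{\perp}$ yields
$$\mu_2 \;\le\; \frac{x^{\top} L x}{\|x\|^{2}} \;\le\; \mu_n.$$
Substituting the two identities above and clearing denominators produces the stated double inequality.

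There is essentially no obstacle: the proof is a one-page application of Rayleigh quotients, and the only thing to be careful about is that $G$ is connected so the kernel of $L$ is exactly $\operatorname{span}(\mathbf{1})$, which legitimizes using $\mu_2$ rather than $\mu_1$ as the lower bound on the Rayleigh quotient over $\mathbf{1}^{\perp}$.
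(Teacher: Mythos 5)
Your argument is correct and complete: the identity $\chi_T^{\top}L\chi_T=e(T,G\setminus T)$, the centering $x=\chi_T-\tfrac{|T|}{n}\mathbf{1}$ with $\|x\|^2=\tfrac{|T|(n-|T|)}{n}$, and the Rayleigh quotient bounds over $\mathbf{1}^{\perp}$ are exactly the standard route to this inequality. The paper itself gives no proof --- it cites the result to Mohar --- so there is nothing to compare against; the only (trivial) point you might add is that when $T=\emptyset$ or $T=V(G)$ both sides vanish, so one may assume $x\neq 0$.
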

    
    The following result was proved independently by Haemers \cite{haemers1995interlacing} and Helmberg, Mohar, Poljak and Rendl \cite{helmberg1995spectral}.
    \begin{thm}[\cite{haemers1995interlacing, helmberg1995spectral}]{\label{HMP}} 
    Let $G$ be a connected $\ell$-regular graph on $N$ vertices, and let $\ell=\lambda_1> \lambda_2\ge \cdots \ge \lambda_N$ denote the eigenvalues of the adjacency matrix.  If $G\setminus S$ separates into vertex sets $S_{1}$ and $S_{2}$, then
    $$|S| \ge \frac{4(\ell-\lambda_2)(\ell-\lambda_{N})|S_1||S_2|}{N(\lambda_{2}-\lambda_N)^{2}}.$$
    \end{thm}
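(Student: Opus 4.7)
The plan is to apply an eigenvalue interlacing argument on a two-dimensional subspace of $\mathbf{1}^\perp$. I would set $u_i = \chi_{S_i} - \frac{|S_i|}{N}\mathbf{1}$ for $i=1,2$, so that $u_1,u_2 \in \mathbf{1}^\perp$, and compute the $2 \times 2$ Gram matrix $G = (u_i^T u_j)$ and the bilinear form $M = (u_i^T A u_j)$ in closed form, using the $\ell$-regularity of $G$ and the hypothesis $e(S_1, S_2) = 0$. A short computation will yield $\det G = |S_1||S_2||S|/N$ and the very convenient identity $M = \ell G - \mathrm{diag}(a,b)$, where $a = e(S_1,S)$ and $b = e(S_2,S)$.

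Since every nonzero $x \in \mathbf{1}^\perp$ has $\lambda_N \leq x^T A x / x^T x \leq \lambda_2$, the $2\times 2$ matrices $\lambda_2 G - M$ and $M - \lambda_N G$ are both positive semidefinite. Non-negativity of the diagonals recovers Mohar-type bounds
$$\frac{(\ell-\lambda_2)\,|S_i|(N-|S_i|)}{N} \;\leq\; e(S_i,S) \;\leq\; \frac{(\ell-\lambda_N)\,|S_i|(N-|S_i|)}{N},$$
so I will parameterize by writing the $(i,i)$-entry of $\lambda_2 G - M$ as $t_i(\lambda_2-\lambda_N)|S_i|(N-|S_i|)/N$ with $t_i \in [0,1]$; the corresponding $(i,i)$-entry of $M - \lambda_N G$ is then $(1-t_i)$ times the same factor. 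Computing the (essentially identical) off-diagonal entries of both $2\times 2$ matrices and imposing non-negativity of their determinants reduces to
$$t_1 t_2 \;\geq\; \frac{|S_1||S_2|(\ell-\lambda_2)^2}{(\lambda_2-\lambda_N)^2(|S_1|+|S|)(|S_2|+|S|)},\qquad (1-t_1)(1-t_2) \;\geq\; \frac{|S_1||S_2|(\ell-\lambda_N)^2}{(\lambda_2-\lambda_N)^2(|S_1|+|S|)(|S_2|+|S|)}.$$

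The crux is then the Cauchy--Schwarz inequality $\sqrt{t_1 t_2} + \sqrt{(1-t_1)(1-t_2)} \leq 1$: taking square roots in the two product bounds and adding them, this coupling yields
$$|S_1||S_2|(2\ell-\lambda_2-\lambda_N)^2 \;\leq\; (\lambda_2-\lambda_N)^2(|S_1|+|S|)(|S_2|+|S|).$$
Expanding $(|S_1|+|S|)(|S_2|+|S|) = |S_1||S_2| + N|S|$ and using the factorization $(2\ell-\lambda_2-\lambda_N)^2 - (\lambda_2-\lambda_N)^2 = 4(\ell-\lambda_2)(\ell-\lambda_N)$ isolates $|S|$ and gives the stated bound.

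The hard part, conceptually, is recognizing that the four PSD conditions (two diagonal and two determinantal) must be combined \emph{simultaneously} via Cauchy--Schwarz on the free parameters $t_1, t_2$; weaker strategies such as separately applying AM--GM to each determinant condition, or only averaging the two Mohar bounds, lose a factor of two in the final inequality. The rest is routine linear algebra that can be dispatched once $M$ and $G$ have been written down explicitly.
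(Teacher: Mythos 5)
Your argument is correct: the computations of the Gram matrix $G$ and the form $M$, the positive semidefiniteness of $\lambda_2 G - M$ and $M - \lambda_N G$ on $\mathbf{1}^{\perp}$, and the Cauchy--Schwarz coupling $\sqrt{t_1t_2}+\sqrt{(1-t_1)(1-t_2)}\le 1$ all check out and deliver exactly the stated bound. The paper itself only cites this theorem without proof, and what you have written is essentially the standard quotient-matrix/interlacing proof of Haemers and of Helmberg--Mohar--Poljak--Rendl, so there is nothing to add.
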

    
    Using the explicit expression for the spectrum of $K(n,k)$ in Theorem \ref{knesereigenvalue}, this bound specializes to
    \begin{align}
    &|S| \ge \frac{8(n-4)(n-6)|S_1||S_2|}{3(n-2)^{3}}, \hspace{1cm} k=3, \label{HMP1}\\
    &|S| \ge \frac{6(n-8)(n-5)|S_1||S_2|}{(n-3)(n-2)^{3}}, \hspace{1cm} k=4.\label{HMP2}
    \end{align}
    
    \subsection{Vertex Partitions}
    In order to apply Theorem \ref{HMP}, we need to guarantee the existence of two vertex sets $S_1,S_2$ with orders as close to equal as possible, in order to maximize the quadratic bound. This is the purpose of Lemma \ref{minlemma}, which can easily be proven by induction on $c$. Details can be found in \cite[Lemma 5.4.2]{wongthesis}.
    \begin{lem}{\label{minlemma}}
    Let $c\ge 2$ and $1\le n_{1}\le n_{2}\le...\le n_{c}$ be integers.  If $\sum_{i=1}^{c}n_{i}\ge 2c$, then there exists a partition 
    $S_1 \cup S_2 = [c], S_1 \cap S_2 = \emptyset$
    such that 
    $$\min\left(   \sum_{i \in S_1}n_i, \sum_{i \in S_2} n_i \right) \geq c-1. $$
    \end{lem}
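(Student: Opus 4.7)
My approach would be induction on $c$, as suggested by the statement. For the base case $c=2$, the hypothesis $n_1+n_2 \ge 4$ together with $n_i \ge 1$ lets us take $S_1=\{1\}$ and $S_2=\{2\}$, giving both sums at least $1 = c-1$.

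For the inductive step, assume the claim at $c$ and consider integers $1 \le n_1 \le \cdots \le n_{c+1}$ with $\sum_i n_i = T \ge 2(c+1)$; the target is a partition whose two sides both sum to at least $c$. I would case-split on $n_{c+1}$. If $n_{c+1} \ge c$, isolating it already works, since the remaining part $\{1,\dots,c\}$ has sum at least $c$ (each entry is positive). Otherwise $n_{c+1} \le c-1$, and I would split further on $n_1$. When $n_1 \le 2$, the remaining $c$ entries sum to at least $2c$, so the inductive hypothesis produces a partition of $\{2,\dots,c+1\}$ with both sides at least $c-1$; moreover the larger side has sum $\ge (T-n_1)/2 \ge c$, and appending $n_1$ to the smaller side raises it to $\ge (c-1)+n_1 \ge c$.

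When instead $n_1 \ge 3$, every entry is at least $3$, so I would argue directly rather than recursing: set $S_2$ to be the top $\lceil c/3\rceil$ entries and $S_1$ the remaining $c+1-\lceil c/3\rceil$ entries. Then $S_2$ sums to at least $3\lceil c/3\rceil \ge c$, while $S_1$ sums to at least $3\bigl(c+1-\lceil c/3\rceil\bigr) \ge c$, using $\lceil c/3\rceil \le (c+2)/3$.

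The subtle point, which I expect to be the main obstacle, is the off-by-one gap between the inductive conclusion ($\ge c-1$ on the reduced problem) and the step's target ($\ge c$): naively reducing to $c$ elements and reinserting the removed one only guarantees $c-1$ on the side that did not absorb it. The case split above dodges this by invoking the hypothesis only when the removed element is small ($n_1 \le 2$), which forces the larger side of the recursive partition to already clear the stronger threshold $c$, so the small reinserted element suffices to lift the other side. The remaining case $n_1 \ge 3$ then has enough mass in every entry that a direct greedy construction avoids recursion altogether.
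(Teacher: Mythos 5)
Your proof is correct and follows the same route the paper indicates (induction on $c$; the paper defers the details to \cite[Lemma 5.4.2]{wongthesis}). The three-way case split ($n_{c+1}\ge c$; $n_1\le 2$ with the inductive hypothesis applied to the remaining $c$ entries, whose total $T-n_1\ge 2c$ forces the larger side to reach $c$ already; $n_1\ge 3$ handled by the direct greedy split into the top $\lceil c/3\rceil$ entries and the rest) correctly resolves the off-by-one issue you identify, and each inequality checks out.
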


    \subsection{Intersecting Families}
    
    The classic Erd\"os-Ko-Rado theorem gives an upper bound on the size of an intersecting  family. 
    \begin{thm}[Erd\"os-Ko-Rado \cite{erdos}]\label{EKR}
    Let $n\ge 2k \ge 2$.  If $\F\subseteq \binom{[n]}{k}$ is intersecting, then 
    $$|\F| \le \binom{n-1}{k-1}.$$
    Equality occurs if and only if $\F=\F(x)$ for some $x\in [n]$, where 
    \[\F(x)=\{A\in \binom{[n]}{k}:x\in A\}.\]
    \end{thm}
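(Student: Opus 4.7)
The plan is to prove the upper bound via Katona's cyclic permutation method, a clean double-counting argument, and then derive the uniqueness statement from the structure of the equality case (with Frankl's shifting as a fallback).

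For the bound I would double-count pairs $(\sigma, A)$, where $\sigma$ is a cyclic ordering of $[n]$ and $A \in \F$ appears as a set of $k$ cyclically consecutive elements in $\sigma$. There are $(n-1)!$ cyclic orderings of $[n]$, and each fixed $k$-subset $A$ is consecutive in exactly $k!(n-k)!$ of them (treat $A$ as a block, freely arrange the resulting $n-k+1$ objects cyclically, and freely permute inside the block). So the total count equals $|\F| \cdot k!(n-k)!$.

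Next I would prove Katona's lemma: for any cyclic ordering $\sigma$, at most $k$ of the $n$ arcs of length $k$ in $\sigma$ can lie in an intersecting family. Fix one such arc $A_0$; any other arc in $\F$ is a cyclic shift of $A_0$ by some $s \in \{-(k-1),\ldots,-1\}\cup\{1,\ldots,k-1\}$. Shifts on the same side automatically intersect $A_0$ and each other (shift gap at most $k-1$); using $n \ge 2k$, a left shift by $s_1$ and a right shift by $s_2$ are disjoint whenever $s_1 + s_2 \ge k$. A short case analysis then bounds the number of admissible shifts plus $A_0$ itself by $k$. Combining the two counts yields
\[
|\F| \cdot k!(n-k)! \;\le\; k \cdot (n-1)!, \qquad\text{i.e.,}\qquad |\F| \le \binom{n-1}{k-1}.
\]

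For the uniqueness claim, equality forces every cyclic ordering of $[n]$ to contain \emph{exactly} $k$ arcs from $\F$, and the tight case of Katona's lemma forces those $k$ arcs to share a common position in $\sigma$. I would then propagate this common element across cyclic orderings: two cyclic orderings sharing a consecutive block of length $k+1$ must share the same common element, and such orderings are connected under adjacent transpositions, so a single $x \in [n]$ lies in every $A \in \F$. Since $|\F(x)| = \binom{n-1}{k-1} = |\F|$, we conclude $\F = \F(x)$. The main obstacle is organizing this propagation cleanly; as a backup, I would invoke Frankl's shifting operator, which preserves intersection and size, reduces to a left-compressed family, and then a direct check shows that a left-compressed intersecting family of maximum size must be a star.
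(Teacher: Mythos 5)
The paper offers no proof of this statement: it is quoted as a classical result with a citation, so there is nothing internal to compare against. Your route (Katona's cycle method for the bound, equality analysis or compression for uniqueness) is the standard second proof of Erd\H{o}s--Ko--Rado and is essentially sound for the inequality, but there are two problems worth flagging, one fixable and one genuine.

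The fixable one is in your case analysis for Katona's lemma. The claim that a left shift by $s_1$ and a right shift by $s_2$ of $A_0$ are disjoint whenever $s_1+s_2\ge k$ fails because of wraparound on the other side of the cycle: already for $n=2k$, $k\ge 3$, and $s_1=s_2=k-1$, the two arcs overlap in $k-2$ positions (and the problem persists for all $n\le 3k-3$, which includes the regime $n=2k+1$ that the paper actually needs). The standard repair is to use only the pairs with $s_1+s_2=k$: for each of the $k-1$ internal boundary points of $A_0$, the arc beginning just after that point and the arc ending at it together occupy exactly $2k$ consecutive positions, hence are disjoint when $n\ge 2k$; every arc other than $A_0$ meeting $A_0$ belongs to exactly one such pair, so at most $k-1$ of them, plus $A_0$, lie in $\F$.

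The genuine gap is in the uniqueness step, and it is forced on you by the statement itself: the equality characterization is false at $n=2k$ for $k\ge 2$. When $n=2k$, two $k$-sets are disjoint iff they are complementary, so any transversal of the complementary pairs that happens to be intersecting is extremal; for instance $\{\{1,2\},\{1,3\},\{2,3\}\}\subseteq\binom{[4]}{2}$ has size $\binom{3}{1}$ and is not a star. Correspondingly, your sublemma that the tight case of Katona's lemma forces the $k$ arcs to share a common position is false on a $2k$-cycle: $\{1,2,3\},\{3,4,5\},\{5,6,1\}$ are pairwise intersecting arcs on a $6$-cycle with empty common intersection. So both the propagation argument and the compression fallback can only establish uniqueness for $n\ge 2k+1$ (uniqueness of EKR genuinely requires $n>2k$, except for $k=1$). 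This does not affect the paper, which only invokes the equality case for $n\ge 2k+1$, but you should restrict the uniqueness claim accordingly, and for $n\ge 2k+1$ the propagation across cyclic orders is the real content of the argument and needs more than the one sentence you give it.
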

    Intersecting families of the form $\F(x)$ are called trivial. The next significant result in this direction is the Hilton-Milner theorem showing that nontrivial intersecting set families are significantly smaller.
    \begin{thm}[Hilton-Milner \cite{hm}]\label{HM}
    Let $n\ge 2k+1 \ge 5$. If $\F\subseteq \binom{[n]}{k}$ is intersecting and $\cap_{F\in\F}F=\emptyset$, then $$|\F| \le \binom{n-1}{k-1}-\binom{n-k-1}{k-1}+1.$$ Furthermore, for $k\ne 3$, equality occurs if and only if $\F=\F(A,x)$ for some $A\in \binom{[n]}{k}$ and $x\notin A$, where
    $$\F(A,x) = \{A\} \cup \{B\in \binom{[n]}{k} :\, x\in B, B\cap A \ne \emptyset\}.$$  
    If $k =3$, equality occurs if and only if $\F=\F(A,x)$ as above or $\F=\mathcal{G}(A)$ for some $A\in \binom{[n]}{k}$, where
    $$\mathcal{G}(A) = \{B\in \binom{[n]}{k}: \, |A \cap B|\ge 2\}.$$  
    \end{thm}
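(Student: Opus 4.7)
The plan is to prove the Hilton-Milner bound via the shifting (left-compression) technique of Frankl, which has become the standard modern approach. Given an intersecting family $\mathcal{F}$ with $\cap_{F\in\mathcal{F}} F = \emptyset$, I would apply the $(i,j)$-shift $S_{ij}$ for each $i<j$: replace each $F \in \mathcal{F}$ containing $j$ but not $i$ by $(F\setminus\{j\})\cup\{i\}$ whenever the resulting set is not already in $\mathcal{F}$. A routine check shows that shifts preserve both $|\mathcal{F}|$ and the intersecting property, so after finitely many shifts we may assume $\mathcal{F}$ is left-compressed. One must also verify that the emptiness of the total intersection is preserved, or else handle the degenerate case where shifting introduces a common element $1$ by reverting the last problematic shift and analyzing that configuration separately.

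Once $\mathcal{F}$ is left-compressed with $\cap_{F\in\mathcal{F}} F = \emptyset$, there exists some $F_0 \in \mathcal{F}$ with $1 \notin F_0$, and left-compression forces $F_0 \subseteq \{2,\ldots,k+1\}$, hence $F_0 = \{2,\ldots,k+1\}$. Write $\mathcal{F} = \mathcal{A} \sqcup \mathcal{B}$ with $\mathcal{A} = \{F : 1 \in F\}$ and $\mathcal{B} = \{F : 1 \notin F\}$. Every $F \in \mathcal{A}$ must meet $F_0$, which immediately gives $|\mathcal{A}| \leq \binom{n-1}{k-1} - \binom{n-k-1}{k-1}$. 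The members of $\mathcal{B}$ form an intersecting family in $\binom{[n]\setminus\{1\}}{k}$ whose sets all meet $F_0$, and using left-compression once more, any $F \in \mathcal{B}$ distinct from $F_0$ can be shown to force a corresponding set out of $\mathcal{A}$ via an injective coupling, so the combined count $|\mathcal{A}|+|\mathcal{B}|$ does not exceed the EKR-star count for $1$ minus $\binom{n-k-1}{k-1}$ plus the one extra set $F_0$ itself, which is the stated bound.

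For the equality characterization, one reverses the shifts: if $|\mathcal{F}|$ attains the Hilton-Milner bound, the counting above is tight, forcing $\mathcal{A}$ to be all $k$-sets containing $1$ that meet $F_0$ and $\mathcal{B} = \{F_0\}$, so the shifted family is exactly $\mathcal{F}(F_0,1)$; undoing the compressions produces $\mathcal{F}(A,x)$ for some $A$ and $x \notin A$. The exceptional case $k=3$ requires separate treatment because $\mathcal{G}(A) = \{B \in \binom{[n]}{3} : |A\cap B| \geq 2\}$ also attains the bound without being a shifted form of $\mathcal{F}(A,x)$; here one verifies directly that shifted extremal families of triples split into exactly the two configurations $\mathcal{F}(A,x)$ and $\mathcal{G}(A)$ by enumerating the allowable extremal structures when the sets involved live in a small universe.

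The main obstacle is the coupling step in bounding $|\mathcal{A}|+|\mathcal{B}|$ cleanly together with the equality characterization, since shifting can collapse the two non-isomorphic extremal families when $k=3$ into the same compressed object. Distinguishing these families after un-shifting, and ruling out hybrid configurations during the reverse-shifting argument, is the delicate part; the rest of the proof reduces to verifying the shifting lemmas and extracting the $F_0 = \{2,\ldots,k+1\}$ structure, both of which are largely mechanical.
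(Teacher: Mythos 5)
A point of orientation first: the paper does not prove Theorem \ref{HM}. It is the classical Hilton--Milner theorem, quoted from \cite{hm} and used purely as a black box, so there is no internal proof to compare against; your sketch has to stand on its own as a proof of a substantial classical result.

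Judged that way, it has a genuine gap at its center. The compression setup is standard and correct: shifts preserve size and the intersecting property, a shifted family with empty total intersection contains $F_0=\{2,\dots,k+1\}$, and the sets containing $1$ and meeting $F_0$ number $\binom{n-1}{k-1}-\binom{n-k-1}{k-1}$. But the step where you control $|\mathcal{A}|+|\mathcal{B}|$ --- ``any $F\in\mathcal{B}$ distinct from $F_0$ can be shown to force a corresponding set out of $\mathcal{A}$ via an injective coupling'' --- is precisely the content of the theorem, and you give no construction of that injection. Note that $\mathcal{B}$ need not be small: for $k=3$ the extremal family $\mathcal{G}(A)$ has $\Theta(n)$ members avoiding any fixed element, so the coupling must genuinely trade each member of $\mathcal{B}\setminus\{F_0\}$ for a distinct $k$-set containing $1$ that is absent from $\mathcal{A}$; exhibiting such an injection (as in the Frankl--F\"{u}redi argument, or the Hurlbert--Kamat injection) is the real work. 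Two further points are asserted rather than resolved: (i) shifting can convert a non-trivial intersecting family into a star, and ``revert the last problematic shift and analyze that configuration separately'' is exactly the case analysis the standard proofs must carry out in detail, not a remark one can defer; (ii) the uniqueness statement cannot be obtained by ``undoing the compressions,'' since shifting is many-to-one on families and both $\mathcal{F}(A,x)$ and, for $k=3$, $\mathcal{G}(A)$ land on shifted families --- the stability part requires its own argument. As written, this is a correct roadmap to a known proof rather than a proof.
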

    
    We define the \textit{diversity} $\gamma(\F)$ of an intersecting family $\F$ as follows: 
    $$\gamma(\F) = |\F| - \max_{i\in [n]} |\{ X \in \F: i \in X \}|.$$
    All $i \in [n]$ that maximize $|\{ X \in \F: i \in X \}|$ are referred to as \textit{central} in $\F$.
    
    Note that $\gamma(\F(x))=0$ (defined as in Theorem \ref{EKR}) and  $\gamma(\F(A,x))=1$ (as in Theorem \ref{HM}). The following theorem focuses on families of higher diversity, and also includes a characterization of the extremal families which we omit.
    
    \begin{thm}[Han--Kohayakawa \cite{han}]\label{han}
    Let $n\geq 2k+1 \geq 7$. Suppose $\F \subseteq \binom{[n]}{k}$ is intersecting and $\F$ is not a subfamily of $\F(x),\F(A,x)$, or $\mathcal{G}(A)$. Then
    $$|\F| \leq   \binom{n-1}{k-1} - \binom{n-k-1}{k-1} - \binom{n-k-2}{k-2}+2.$$
    \end{thm}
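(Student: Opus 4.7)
The plan is to use Frankl's shifting technique to reduce to shifted intersecting families, then split on a central element and bound the size via inclusion--exclusion against two ``uncovered'' sets forced by the exclusion hypothesis.

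First I would apply the standard left-compressions $S_{ij}$ (for $1 \le i < j \le n$) to $\F$ to obtain a shifted intersecting family $\F^*$ of the same cardinality. The three extremal configurations $\F(x), \F(A,x), \mathcal{G}(A)$ each have canonical shifted representatives (take $x=1$ and $A$ an initial segment of $[n]$), so one verifies that if the shifted $\F^*$ is contained in one of them then so is the original $\F$, up to relabeling of $[n]$. Hence we may assume $\F$ itself is shifted and still fails the three subfamily containments.

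Because of shiftedness, the element $1$ is central, so writing $\F = \F_1 \sqcup \F_{\bar 1}$ for the sets containing or avoiding $1$, we have $\gamma(\F) = |\F_{\bar 1}|$. The exclusion $\F \not\subseteq \F(1)$ yields $|\F_{\bar 1}| \ge 1$, and a short case check using shiftedness together with the exclusions from $\F(A,x)$ and $\mathcal{G}(A)$ forces $\F_{\bar 1}$ to contain (at least) the two shifted-smallest sets $F_1 = \{2,\ldots,k+1\}$ and $F_2 = \{2,\ldots,k,k+2\}$, whose union has size $k+1$. Each $F \in \F_1$ then has the form $\{1\} \cup Y$ with $Y \in \binom{[2,n]}{k-1}$ meeting both $F_1$ and $F_2$. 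Inclusion--exclusion, followed by Pascal's identity, gives
\[
|\F_1| \le \binom{n-1}{k-1} - 2\binom{n-k-1}{k-1} + \binom{n-k-2}{k-1} = \binom{n-1}{k-1} - \binom{n-k-1}{k-1} - \binom{n-k-2}{k-2},
\]
and when $|\F_{\bar 1}| = 2$ the claimed bound follows immediately by adding $|\F_{\bar 1}|$.

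The remaining case---diversity $|\F_{\bar 1}| \ge 3$---is the main obstacle. Each additional set in $\F_{\bar 1}$ imposes a further intersection constraint on the admissible $Y$, and one must show that the resulting reduction in $|\F_1|$ at least compensates for the growth of $|\F_{\bar 1}|$. A naive inclusion--exclusion loses too many terms, so I would proceed by induction on diversity, using shiftedness to guarantee that each new set of $\F_{\bar 1}$ introduced in the induction step kills at least one previously admissible $Y \in \binom{[2,n]}{k-1}$. This careful bookkeeping, and the verification that the forced-sets argument in the second paragraph is genuinely robust under shifting, are the technical heart of the Han--Kohayakawa argument.
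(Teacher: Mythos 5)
This statement is not proved in the paper: Theorem \ref{han} is imported as an external tool from Han and Kohayakawa \cite{han}, so there is no internal proof to compare against, and your sketch has to stand on its own. Its skeleton is the classical Frankl-style template (compress, split off the sets avoiding the most popular element, bound the rest by inclusion--exclusion), and the diversity-two computation is correct: for a shifted family the two minimal sets of $\F_{\bar 1}$ are indeed $\{2,\ldots,k+1\}$ and $\{2,\ldots,k,k+2\}$, their union has size $k+1$, and $\binom{n-1}{k-1}-2\binom{n-k-1}{k-1}+\binom{n-k-2}{k-1}=\binom{n-1}{k-1}-\binom{n-k-1}{k-1}-\binom{n-k-2}{k-2}$ by Pascal. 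But two steps are genuinely missing. First, the compression reduction: the direction you need --- ``if the shifted $\F^*$ is contained in $\F(x)$, $\F(A,x)$, or $\mathcal{G}(A)$, then so is $\F$ up to relabeling'' --- is false as stated. Compressions can strictly decrease diversity; in particular a family with empty common intersection can become a star after shifting, which is precisely why even the shifting proof of Hilton--Milner must stop the compression process just before triviality is created. Here you have three forbidden containments to preserve simultaneously, and you give no mechanism for doing so.

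The second and larger gap is the case $d=|\F_{\bar 1}|\ge 3$, which you correctly identify as the heart of the matter but do not resolve. What is needed is that the number of $Y\in\binom{[2,n]}{k-1}$ meeting every member of $\F_{\bar 1}$ is at most $\binom{n-1}{k-1}-\binom{n-k-1}{k-1}-\binom{n-k-2}{k-2}-(d-2)$, i.e., that each member of $\F_{\bar 1}$ beyond the first two eliminates at least one \emph{new} admissible $Y$. It is easy to check that each new $G\in\F_{\bar 1}$ excludes \emph{some} $Y$ meeting $F_1$ and $F_2$, but distinct $G$'s may exclude overlapping collections of $Y$'s, so ``each kills at least one'' does not convert into a net loss of one per set; without additional combinatorial input (Frankl-type diversity bounds, or Kruskal--Katona estimates of the kind the paper uses elsewhere) the induction you propose does not close. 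As written, the proposal is an accurate map of where the difficulty lies rather than a proof of the theorem.
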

    
    Another celebrated result from extremal set theory is the Kruskal-Katona theorem, which provides a lower bound on the number of particular subsets of a family. 
    \begin{thm}[Kruskal--Katona \cite{kruskal, katona}]\label{KK}
    Let $n\geq i\geq r$ be natural numbers and $\F \subseteq \binom{[n]}{i}$. Denote by $\F_1$ the family of all $(i-r)-$size subsets of the sets in $\F$, 
    $$\F_1=\{B\in \binom{[n]}{i-r}: B\subseteq A, \text{ for some } A\in \F\}.$$
    If we uniquely expand 
    $$|\F| = \binom{n_i}{i} + \binom{n_{i-1}}{i-1} + \cdots + \binom{n_j}{j},$$ where $n_i > n_{i-1} > \ldots > n_j \ge j \ge 1$ are natural numbers, then
    $$|\F_1| \ge \binom{n_i}{i-r} + \binom{n_{i-1}}{i-r-1} + \cdots + \binom{n_j}{j-r}.$$ 
    \end{thm}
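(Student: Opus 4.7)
The plan is to prove Kruskal--Katona via the classical compression/shifting technique. First I would reduce to the single-shadow case $r=1$, since the $(i-r)$-shadow $\F_1$ equals the $r$-fold iterated shadow $\partial^r \F$. Iterating the $r=1$ case $r$ times suffices provided the cascade lower bound is monotone in $|\F|$; this monotonicity is itself a fact about the colex order, since extending a colex-initial segment by one set can only enlarge its shadow.

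For the $r=1$ case, I would use \emph{left-compressions}. For each pair $1 \le s < t \le n$, define $C_{st}(\F)$ by replacing each $A \in \F$ satisfying $t \in A$, $s \notin A$, and $(A \setminus \{t\}) \cup \{s\} \notin \F$ with $(A \setminus \{t\}) \cup \{s\}$, and leaving the other sets of $\F$ untouched. Two facts are needed: (i) $|C_{st}(\F)| = |\F|$, which is immediate, and (ii) $|\partial C_{st}(\F)| \le |\partial \F|$, which follows from an explicit injection from $\partial C_{st}(\F) \setminus \partial \F$ into $\partial \F \setminus \partial C_{st}(\F)$ that sends each such $B$ (necessarily containing $s$ but not $t$) to $(B \setminus \{s\}) \cup \{t\}$. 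Iterating over all pairs, the process terminates at a left-compressed family $\F^*$ with $|\F^*|=|\F|$ and $|\partial \F^*| \le |\partial \F|$.

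For a left-compressed $\F^*$, partition it as $\F^* = \F_{\bar{n}} \sqcup \F_n$ according to whether the top element $n$ is excluded or included, and set $\widetilde{\F}_n = \{A \setminus \{n\} : A \in \F_n\} \subseteq \binom{[n-1]}{i-1}$. Left-compressedness forces $\widetilde{\F}_n \subseteq \partial \F_{\bar{n}}$, which yields the clean identity $|\partial \F^*| = |\partial \F_{\bar{n}}| + |\partial \widetilde{\F}_n|$. One then applies the inductive hypothesis (on $i+n$, say) to both $\F_{\bar{n}} \subseteq \binom{[n-1]}{i}$ and $\widetilde{\F}_n \subseteq \binom{[n-1]}{i-1}$ separately and sums the resulting lower bounds. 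The main obstacle is the algebraic bookkeeping at this final step: one must relate the cascade expansion of $|\F|$ to those of its two pieces $|\F_{\bar{n}}|$ and $|\widetilde{\F}_n|$ and verify that the sum of the two cascade bounds recovers exactly $\binom{n_i}{i-1} + \binom{n_{i-1}}{i-2} + \cdots + \binom{n_j}{j-1}$. Once this algebra is arranged --- typically by case-splitting on whether $|\F| \ge \binom{n-1}{i}$ and recursing on the tail --- the compression step and the inductive peeling combine cleanly to yield the theorem.
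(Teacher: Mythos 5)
The paper does not actually prove this statement: Theorem \ref{KK} is the classical Kruskal--Katona theorem, quoted with citations and used as a black box (through Lemma \ref{KKBounds}), so there is no in-paper argument to compare yours against. Judged on its own, your outline is the standard compression proof, and its individual ingredients are sound. The reduction to $r=1$ is legitimate because the cascade function is the shadow size of a colex initial segment and the shadow of an initial segment is again an initial segment, so the one-step bound composes with itself. The $(s,t)$-compression preserves cardinality and does not increase the shadow; your injection $B \mapsto (B\setminus\{s\})\cup\{t\}$ on $\partial C_{st}(\F)\setminus\partial\F$ does check out (any such $B$ must contain $s$ and omit $t$, its image equals $A\setminus\{x\}$ for the uncompressed preimage $A\in\F$, and membership of the image in $\partial C_{st}(\F)$ would force $B\in\partial\F$). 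For a left-compressed family, the identity $|\partial\F^*| = |\partial\F_{\bar{n}}| + |\partial\widetilde{\F}_n|$ together with $\widetilde{\F}_n \subseteq \partial\F_{\bar{n}}$ is also correct (assuming $i<n$, which is the only nontrivial case).

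The genuine gap is the step you flag and then wave through. Knowing only $|\F_{\bar{n}}| + |\widetilde{\F}_n| = |\F|$ and applying the inductive hypothesis to each piece separately does not recover the target bound: for an arbitrary split of $|\F|$ into two parts, the sum of the two cascade bounds can fall far below $\binom{n_i}{i-1}+\cdots+\binom{n_j}{j-1}$ (e.g.\ when almost all of $\F$ sits in $\F_n$). The proof must use the containment $\widetilde{\F}_n \subseteq \partial\F_{\bar{n}}$ a second time, quantitatively, in a case split on whether $|\widetilde{\F}_n|$ is at least $\binom{n_i-1}{i-1}+\cdots+\binom{n_j-1}{j-1}$; one case closes via Pascal's identity applied termwise to $|\widetilde{\F}_n|+|\partial\widetilde{\F}_n|$, and the other derives a contradiction from the resulting lower bound on $|\F_{\bar{n}}|$. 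This is not bookkeeping --- it is precisely where the exact binomial cascade (as opposed to the easier Lov\'asz fractional form) is established, and it is the only part of the theorem that is not routine. As written, your proposal is an accurate roadmap of the known argument rather than a complete proof.
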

    
    For a graph $G=(V,E)$ and a subset of vertices $Q\subseteq V$, the \emph{neighborhood} $N(Q)$ consists of the vertices not in $Q$ that are adjacent to a vertex in $Q$.
    \begin{lem}\label{KKBounds}
    Let $n \ge 2k + 1 \ge 5$ and let $Q$ be an independent set in $K(n,k)$. If we uniquely expand
    $$|Q| = \binom{q_{n-k}}{n-k} + \binom{q_{n-k-1}}{n-k-1} + \cdots + \binom{q_j}{j},$$ where $q_{n-k} > q_{n-k-1} > \ldots > q_j \ge j \ge 1$ are natural numbers, then $$|N(Q)| \ge \binom{q_{n-k}}{k} + \binom{q_{n-k-1}}{k-1} + \cdots + \binom{q_j}{j-(n-2k)}.$$  
    \end{lem}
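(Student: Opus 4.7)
The plan is to reduce the claim to a direct application of the Kruskal--Katona theorem (Theorem \ref{KK}) by passing to complements. Writing $A^c = [n]\setminus A$, set
$$\overline{Q} = \{A^c : A\in Q\} \subseteq \binom{[n]}{n-k},$$
so $|\overline{Q}| = |Q|$ and $\overline{Q}$ inherits the cascade expansion with parameters $q_{n-k} > q_{n-k-1} > \cdots > q_j$ at top index $i = n-k$.

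Next I would identify $N(Q)$ with the shadow of $\overline{Q}$ at level $k$. For any $B \in \binom{[n]}{k}$ and $A \in Q$, the adjacency condition $A \cap B = \emptyset$ in $K(n,k)$ is equivalent to $B \subseteq A^c$. Hence a $k$-set $B$ has a neighbor in $Q$ if and only if $B$ is contained in some element of $\overline{Q}$. To conclude that this shadow contributes only to $N(Q)$ and not to $Q$ itself, I would use that $Q$ is intersecting: for any $B \in Q$ and $A \in Q$, we have $B \cap A \neq \emptyset$ (including $A=B$, since $k\ge 1$), so $B \not\subseteq A^c$. Therefore no element of $Q$ lies in the shadow, and the shadow of $\overline{Q}$ at level $k$ is exactly $N(Q)$.

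Finally I would apply Theorem \ref{KK} to $\overline{Q}$ with $i = n-k$ and the parameter $r = n-2k$, so that $i - r = k$. The cascade bound then reads
$$|N(Q)| \;\ge\; \binom{q_{n-k}}{k} + \binom{q_{n-k-1}}{k-1} + \cdots + \binom{q_j}{j-(n-2k)},$$
which is the claimed inequality. The condition $n\ge 2k+1$ ensures $r = n-2k \ge 1$ so the Kruskal--Katona shadow makes sense, and the lower bound $j\ge 1$ on the cascade indices is precisely what keeps all binomial coefficients $\binom{q_j}{j-(n-2k)}$ well-defined (and 0 when the lower entry becomes negative, which is harmless).

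No step here should pose a serious obstacle; the only point requiring care is the verification that the shadow of $\overline{Q}$ at level $k$ contains no element of $Q$, which is where the intersecting hypothesis is used. Everything else is just bookkeeping to match the indexing in the statement of Kruskal--Katona with the indexing in the conclusion.
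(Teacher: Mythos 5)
Your proof is correct and takes essentially the same route as the paper's: pass to the complementary family in $\binom{[n]}{n-k}$ and apply Kruskal--Katona with $i=n-k$ and $r=n-2k$. The paper only observes that the shadow is contained in $N(Q)$, which already gives the lower bound; your additional verification (via independence of $Q$) that no element of $Q$ lies in the shadow is a harmless refinement.
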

    \begin{proof}
    
    Define $\F = \{A^c : A \in Q\},$ where $A^c$ $=[n]\setminus A$. Note $\F \subseteq {[n] \choose n-k}$, and clearly for all $X \in \F$, we have that $X$ is disjoint from $A \in Q$ for some $A$, namely the $A$ such that $X=A^c$. Therefore, all $k$-size subsets of $X$ are in $N(Q)$. Then if $\F_1$ is the family of $k$-size subsets of the elements of $\F$, $\F_1 \subseteq N(Q)$ and thus $|\F_1|\le|N(Q)|$. Note that $|\F|=|Q|$. The result then follows from Theorem \ref{KK} with $i=n-k$ and $r=n-2k$.
    \end{proof}
    
    
    
    \begin{lem}\label{switch}
    Suppose $n \ge 2k + 1 \ge 5$. Let $Q$ be an independent set of vertices in $K(n,k)$.  Suppose $x_1x_2$ is an edge such that $x_1\in Q$.  If $N(\{x_1,x_2\})$ and $Q$ are disjoint, then $|Q| \le \binom{n-1}{k-1}-\binom{n-k-1}{k-1}+1$.
    \end{lem}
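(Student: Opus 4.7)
My plan is to translate the neighborhood hypothesis into a purely combinatorial constraint on the intersecting family $Q$, then do a case split according to whether or not $Q$ has a common element. First I would unpack the hypothesis: since $x_1\cap x_2=\emptyset$ we have $x_2\notin Q$, and since $Q$ is independent every $A\in Q\setminus\{x_1\}$ already intersects $x_1$. The only new information supplied by $N(\{x_1,x_2\})\cap Q=\emptyset$ is therefore that every $A\in Q\setminus\{x_1\}$ must intersect $x_2$ as well. Note that $x_1$ itself is the unique element of $Q$ that fails to meet $x_2$.

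Now I would split into two cases. If $\bigcap_{A\in Q}A=\emptyset$, the Hilton--Milner theorem (Theorem \ref{HM}) immediately yields the desired bound $|Q|\le \binom{n-1}{k-1}-\binom{n-k-1}{k-1}+1$. If instead $\bigcap_{A\in Q}A\ne\emptyset$, I would pick any $z$ lying in every set of $Q$. Then $z\in x_1$, and since $x_1\cap x_2=\emptyset$ we have $z\notin x_2$. Every $A\in Q\setminus\{x_1\}$ is then a $k$-subset containing $z$ and meeting $x_2$, so
\[
Q\setminus\{x_1\} \;\subseteq\; \{A\in\tbinom{[n]}{k} : z\in A,\ A\cap x_2\ne\emptyset\}.
\]
The right-hand set has size $\binom{n-1}{k-1}-\binom{n-k-1}{k-1}$, obtained by subtracting the $k$-subsets that contain $z$ but avoid $x_2$ (i.e.\ are built from $\{z\}$ together with a $(k-1)$-subset of the $n-1-k$ elements in $[n]\setminus(\{z\}\cup x_2)$). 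Adding back $x_1$ gives the claimed bound.

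The only substantive step is the bookkeeping in the first paragraph, and the main thing to be careful about is the asymmetry between $x_1$ and $x_2$: $x_1$ is in $Q$ and fails the "intersect $x_2$" constraint, so it contributes the extra $+1$ in the final count and must be separated out before applying the uniform intersection condition to the rest of $Q$. There is no real obstacle beyond correctly identifying this one exceptional element.
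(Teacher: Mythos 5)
Your proof is correct, but it takes a genuinely different route from the paper. You split directly on whether $Q$ has a common element: in the non-trivial case you invoke Hilton--Milner (Theorem \ref{HM}) once and are done, and in the trivial case you count directly the $k$-sets containing the common element $z$ and meeting $x_2$, getting exactly $\binom{n-1}{k-1}-\binom{n-k-1}{k-1}$ and then $+1$ for the exceptional set $x_1$ (your bookkeeping here is right: $x_2\notin Q$ since $Q$ is independent, and $N(\{x_1,x_2\})\cap Q=\emptyset$ forces every $A\in Q\setminus\{x_1\}$ to meet $x_2$). The paper instead argues by contradiction with a switching trick: assuming $|Q|\ge\binom{n-1}{k-1}-\binom{n-k-1}{k-1}+2$, it forms $Q'=Q\cup\{x_2\}\setminus\{x_1\}$, applies the contrapositive of Hilton--Milner to both $Q$ and $Q'$ to extract two distinct central elements $y_1,y_2$, concludes every member of $Q\setminus\{x_1\}$ contains both, hence $|Q|\le\binom{n-2}{k-2}+1$, and then checks a binomial inequality to reach the contradiction. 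Your argument is more direct and sharper in the trivial case (it produces the exact Hilton--Milner-type bound rather than the weaker $\binom{n-2}{k-2}+1$), and it avoids both the verification that the swapped family is intersecting and the final inequality check. What the paper's swap technique buys is a template that extends to the harder Lemma \ref{switch2}, where two vertices of $Q$ are exchanged across a $4$-cycle or two $K_2$'s and a direct count analogous to yours is less immediate; there the diversity/central-element bookkeeping does the work that your explicit enumeration does here.
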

    
    \begin{proof}
    Assume to the contrary that $|Q| \ge\binom{n-1}{k-1}-\binom{n-k-1}{k-1} + 2$. By  the contrapositive of Theorem \ref{HM}, we must have $\gamma(Q) =0$.  Let $Q'=Q\cup\{x_2\}\setminus\{x_1\}$.  Again we have  $\gamma(Q') = 0$. Since $x_1$ and $x_2$ are adjacent and thus disjoint, $Q$ and $Q'$ cannot share a central element.  Let $y_1$ be a central element of $Q$.  Then $y_1$ cannot be an element of $x_2$, and so there exists another central element $y_2$ in $Q'$.   Thus, all elements in $Q \setminus \{x_1\}$ must contain $y_1$ and $y_2$, both central elements. This gives us that $|Q| \le \binom{n-2}{k-2}+1$. However,
    $$\binom{n-1}{k-1}-\binom{n-k-1}{k-1}+2=\binom{n-2}{k-2}+\binom{n-2}{k-1}-\binom{n-k-1}{k-1}+2>\binom{n-2}{k-2} +1,$$
    a contradiction.
    \end{proof}
    
    \begin{lem}\label{switch2}
    Suppose $n \ge 2k + 1 \ge 9$.  Let $Q$ be an independent set of vertices in $K(n,k)$.  Suppose $x_1,x_2,x_3,x_4$ induce a 4-cycle, and that $x_1,x_3\in Q$.  If $N(\{x_1, x_2,x_3,x_4\})$ and $Q$ are disjoint, then 
    \begin{equation}\label{lemma14}
        |Q| \leq    \binom{n-1}{k-1} - \binom{n-k-1}{k-1} - \binom{n-k-2}{k-2} + 2.
    \end{equation}
    Moreover, if $x_1,x_2,x_3,x_4$ instead induce two $K_2$'s, then we again have (\ref{lemma14}).
    \end{lem}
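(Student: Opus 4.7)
The plan is to adapt the proof of Lemma \ref{switch}, replacing Theorem \ref{HM} (Hilton--Milner) with the stronger Theorem \ref{han} (Han--Kohayakawa) and performing a two-element swap instead of a one-element swap. Assume for contradiction that
\[
|Q| \ge \binom{n-1}{k-1} - \binom{n-k-1}{k-1} - \binom{n-k-2}{k-2} + 3.
\]
Define the swap $Q' := (Q \setminus \{x_1, x_3\}) \cup \{x_2, x_4\}$. In both the 4-cycle and the two-$K_2$ configurations, $x_2$ and $x_4$ are non-adjacent in $K(n,k)$, so $x_2 \cap x_4 \ne \emptyset$; also $x_2, x_4 \notin Q$ (otherwise $Q$ contains an edge); and each $q \in Q \setminus \{x_1, x_3\}$ intersects both $x_2$ and $x_4$ by the hypothesis $N(\{x_1,\ldots,x_4\}) \cap Q = \emptyset$. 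Hence $Q'$ is an independent set of the same size as $Q$.

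Since $|Q| = |Q'|$ exceeds the Han--Kohayakawa threshold, the contrapositive of Theorem \ref{han} forces each of $Q$ and $Q'$ to be a subfamily of $\F(y)$, $\F(A, z)$, or $\mathcal{G}(A)$. Set $R := Q \setminus \{x_1, x_3\}$, so $|Q| = |R| + 2$, and note every $r \in R$ intersects all of $x_1, x_2, x_3, x_4$. If $Q \subseteq \F(y)$, then $y \in x_1 \cap x_3$ while $y \notin x_2 \cup x_4$ (using $x_1 \cap x_2 = x_3 \cap x_4 = \emptyset$). Inclusion--exclusion on the conditions $y \in r$, $r \cap x_2 \ne \emptyset$, $r \cap x_4 \ne \emptyset$ gives
\[
|R| \le \binom{n-1}{k-1} - 2\binom{n-k-1}{k-1} + \binom{n - |x_2 \cup x_4| - 1}{k-1}.
\]
Using $|x_2 \cup x_4| \le 2k - 1$ and the Pascal identity $\binom{n-k-1}{k-1} = \binom{n-k-2}{k-1} + \binom{n-k-2}{k-2}$ yields $|Q| \le \binom{n-1}{k-1} - \binom{n-k-1}{k-1} - \binom{n-k-2}{k-2} + 2$, contradicting the assumption. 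The case $Q \subseteq \F(A,z)$ splits by whether $A \in \{x_1, x_3\}$: if not, then $z \in x_1 \cap x_3$ and the analysis reduces to the previous case with $z$ in place of $y$ (together with one extra counting refinement accounting for the possible presence of $A$ in $R$); if say $x_1 = A$, then $z \in x_3$ with $z \notin x_1 \cup x_4$, and each $r \in R$ additionally satisfies $r \cap x_1 \ne \emptyset$, so a three-constraint inclusion--exclusion yields a strictly sharper bound.

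The main obstacle is the case $Q \subseteq \mathcal{G}(A)$: since $|\mathcal{G}(A)|$ may exceed the target bound for $n$ close to $2k+1$, we cannot conclude from $Q \subseteq \mathcal{G}(A)$ alone. One must then exploit the parallel structural information for $Q'$ (a subfamily of one of the same three types), the conditions $|x_1 \cap A|, |x_3 \cap A| \ge 2$, and the four-fold intersection requirement on elements of $R$. The two-$K_2$ configuration is easier: here $x_2 \cap x_3 \ne \emptyset$ and $x_1 \cap x_4 \ne \emptyset$ make the additional swaps $(Q \setminus \{x_1\}) \cup \{x_2\}$ and $(Q \setminus \{x_3\}) \cup \{x_4\}$ also independent; these auxiliary independent sets must also lie in one of the three Han--Kohayakawa types, providing further structural constraints that force $|Q|$ to be much smaller than the target bound. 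The 4-cycle configuration, where $Q'$ is the only available swap, demands the most delicate analysis of how the $\mathcal{G}(A)$ structure interacts with the intersection constraints imposed on $R$.
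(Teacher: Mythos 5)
Your setup coincides with the paper's (the contradiction hypothesis, the swap $Q'=(Q\setminus\{x_1,x_3\})\cup\{x_2,x_4\}$ with its independence check, and the appeal to the contrapositive of Theorem \ref{han}), but the proposal has a genuine gap: the case $Q\subseteq\mathcal{G}(A)$ is left open (you call it ``the main obstacle''), the case $Q\subseteq\F(A,z)$ is only sketched (``one extra counting refinement,'' ``strictly sharper bound'' are asserted, not proved), and the two-$K_2$ configuration is not proved at all. The missing observation for your main obstacle is that the hypothesis $n\ge 2k+1\ge 9$ forces $k\ge 4$, and this is precisely what the paper invokes to discard the $\mathcal{G}(A)$ alternative: that exception is only relevant for $k=3$ (paralleling the equality case of Theorem \ref{HM}; indeed for $k\ge 4$ the family $\mathcal{G}(A)$ is not even intersecting, e.g.\ split $A$ into two parts of size at least $2$ and complete them disjointly outside $A$). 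Once $\mathcal{G}(A)$ is gone, both $Q$ and $Q'$ are subfamilies of $\F(y)$ or $\F(A,z)$ and hence have diversity at most $1$, and the paper finishes uniformly without any case-by-case inclusion--exclusion: a central element $y_1$ of $Q$ lies in $x_1$ or $x_3$, hence in neither $x_2$ nor $x_4$, so it cannot be central in $Q'$; taking a second central element $y_2$ of $Q'$ forces the members of $Q\setminus\{x_1,x_3\}$ to contain both $y_1$ and $y_2$, giving $|Q|\le\binom{n-2}{k-2}+2$, which contradicts the assumed lower bound. (Your inclusion--exclusion in the $\F(y)$ case is correct, but it is doing by hand what the diversity argument gets for free, and it does not extend on its own to the other cases.)

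For the two-$K_2$ case your plan is only a hope: you assert that the extra swaps ``force $|Q|$ to be much smaller than the target bound'' without an argument. The paper's treatment is concrete: since $y_1\in x_1$ or $y_1\in x_3$ and the only induced edges are $x_1x_4$ and $x_2x_3$, one can choose $z_1\in\{x_1,x_4\}$ and $z_2\in\{x_2,x_3\}$ with $y_1\notin z_1\cup z_2$, pass to $Q''=(Q\setminus\{x_1,x_3\})\cup\{z_1,z_2\}$ (still intersecting, of the same size), and run the same two-central-elements argument to get $|Q|\le\binom{n-2}{k-2}+2$ again. Without the $k\ge4$ reduction, the diversity finish (or complete counting in all three structural cases), and a worked-out two-$K_2$ argument, the proof is not complete.
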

    \begin{proof} 
    Assume to the contrary that $|Q|\ge \binom{n-1}{k-1} - \binom{n-k-1}{k-1} - \binom{n-k-2}{k-2}+3.$  By the contrapositive of Theorem \ref{han} and with $k\ge 4$ by assumption, $Q$ is a set of the form $\mathcal{F}(x)$ or $\mathcal{F}(A,x)$.   Let $Q'=Q\cup\{x_2,x_4\}\setminus \{x_1,x_3\}$.  By the contrapositive of Theorem \ref{han}, $\gamma(Q),\gamma(Q')\le 1$.  Let $y_1$ be a central element of $Q$.  Since $x_1$, $x_3$ are adjacent to both $x_2$ and $x_4$, $y_1$ cannot be a central element of $Q'$.  So there must be some $y_2$ as a central element of $Q'$. We find that for all $A\in Q\setminus\{x_1,x_3\}$, necessarily $y_1,y_2\in A$.  Thus  $|Q|\le \binom{n-2}{k-2}+2$.  However, noting that $n \geq 2k+1 \geq 9$, we have 
    \begin{align*}{\label{inequal}}
    \binom{n-1}{k-1} - &\binom{n-k-1}{k-1} - \binom{n-k-2}{k-2} + 3\\ &=\binom{n-2}{k-2}+\binom{n-2}{k-1} - \binom{n-k-1}{k-1} - \binom{n-k-2}{k-2}+3\\
    &=\binom{n-2}{k-2} + \left(\binom{n-3}{k-1}-\binom{n-k-1}{k-1}\right)+ \left(\binom{n-3}{k-2}-\binom{n-k-2}{k-2}\right)+3\\
    &>\binom{n-2}{k-2}+2,
    \end{align*}
    a contradiction.
    
    Now suppose $x_1x_4$ and $x_2x_3$ are two induced edges.  Assume to the contrary that $|Q|\ge \binom{n-1}{k-1} - \binom{n-k-1}{k-1} - \binom{n-k-2}{k-2}+3$, with $x_1,x_3\in Q$.   By the contrapositive of Theorem \ref{han}, we have $\gamma(Q)\le 1$.   Let $y_1$ be a central element in $Q$. We see that $y_1\in x_1$ or $y_1\in x_3$.  Furthermore, if $y_1\in x_1$, then $y_1\not\in x_4$ due to adjacency.  Similiarly, if $y_1\in x_3$, then $y_1\not\in x_2$.   It follows that there exists a $z_1\in \{x_1,x_4\}$ and $z_2\in\{x_2,x_3\}$ such that $y_1\not\in z_1$ and $y_1\not\in z_2$. Define $H=Q\setminus\{x_1,x_3\}$ and  $Q''=H\cup\{z_1,z_2\}$. Now $|Q|=|Q''|$, so by the contrapositive of  Theorem \ref{han}, $\gamma(Q'')\le 1$, and thus $y_1$ cannot be a central element in $Q''$.  So there exists some other central element $y_2$ in $Q''$.  Hence, every element in $H$ must contain $y_1$ and $y_2$, implying $|Q|\le \binom{n-2}{k-2}+2$. By the inequality above, we again have a contradiction.
    \end{proof}

    \subsection{Structural Properties}
    
    Some properties of $K(n,k)$ follow as a consequence of the Kneser graph being vertex and edge-transitive (\cite[Ch. 3]{AGT}, \cite{MR2828414}, and \cite{WATKINS197023}).

    \begin{lem}{\label{discon}}
    For any vertex $u$ in $K(n,k)$, the graph obtained by deleting $u$ and its neighborhood is connected.  
    \end{lem}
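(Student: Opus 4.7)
By vertex-transitivity of $K(n,k)$, assume $u = [k]$. Let $V^* := V(K(n,k)) \setminus (\{u\} \cup N(u))$; this is the set of $k$-subsets $A$ of $[n]$ with $1 \le |A \cap u| \le k-1$, and I aim to show the induced subgraph $H = K(n,k)[V^*]$ is connected.

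The cornerstone is the following common-neighbor observation: if $A, B \in V^*$ satisfy $|A \cup B| \le n - k$ and $u \not\subseteq A \cup B$, then $A$ and $B$ share a common neighbor in $V^*$. Indeed, pick any $y \in u \setminus (A \cup B)$ and extend $\{y\}$ to a $k$-subset $C \subseteq [n] \setminus (A \cup B)$ containing $y$, which is possible since $|[n] \setminus (A \cup B)| \ge k$. Then $C$ is disjoint from both $A$ and $B$, $y \in C \cap u$, and $C \ne u$ (else $A \cap u \subseteq A \cap C = \emptyset$, contradicting $A \in V^*$); so $C \in V^*$ gives the length-$2$ path $A - C - B$.

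For an arbitrary pair $A, B \in V^*$, the plan is to find shifts $A' \in V^* \cap N(A)$ and $B' \in V^* \cap N(B)$ (allowing $A' = A$ or $B' = B$) so that the pair $(A', B')$ meets the two hypotheses above, yielding a path of length at most $4$ in $H$. The flexibility is substantial: for $n \ge 2k + 1$, each vertex $A$ has at least $\binom{n-k}{k} - \binom{n-2k+|A \cap u|}{k}$ neighbors in $V^*$, corresponding to $k$-subsets of the $(n-k)$-element set $[n] \setminus A$ that meet $u$. By forcing $A'$ and $B'$ to both contain a common subset $D \subseteq [n] \setminus (A \cup B)$ of size $\max(0, 3k - n)$, one gets $|A' \cup B'| \le n - k$; by forcing them to jointly exclude some $y \in u$, one gets $u \not\subseteq A' \cup B'$. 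When $u \cap A \cap B \ne \emptyset$, any $y$ in that set works automatically since then $y \notin A'$ and $y \notin B'$ regardless of how the shifts are chosen.

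The main obstacle will be the edge case $u \subseteq A \cup B$ with $u \cap A \cap B = \emptyset$: here $A \cap u$ and $B \cap u$ partition $u$, so every $y \in u$ lies in exactly one of $A, B$, and one simultaneously needs $A' \cap u \ne \emptyset$ and $B' \cap u \ne \emptyset$. A case split on $|A \cap u|$ and $|B \cap u|$ resolves this: when both are at least $2$, one can still pick a $y \in u$ that is avoided by both shifts while leaving room for the required intersections; when $|A \cap u| = 1$ (the extreme case), the structural constraint $u \cap A \cap B = \emptyset$ together with $u \subseteq A \cup B$ forces $|A \cap B| \le 1$, so either $A \sim B$ directly or a single shift produces a short path. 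The hypothesis $n \ge 2k + 1$ provides exactly the slack needed for all these shifts to land in $V^*$.
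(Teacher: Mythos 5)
The paper never actually proves Lemma \ref{discon}: it is quoted as a known structural property following from vertex- and edge-transitivity, with references, so there is no in-paper argument to compare yours against. Judged on its own, your write-up is a plan rather than a proof, and the one concrete mechanism it offers for the general case does not work. Your common-neighbor observation is correct and is a genuinely useful first step: if $A,B\in V^*$ satisfy $|A\cup B|\le n-k$ and $u\not\subseteq A\cup B$, then they have a common neighbor in $V^*$. The problem is the reduction of arbitrary pairs to this situation. You propose to guarantee $|A'\cup B'|\le n-k$ by making $A'$ and $B'$ share a set $D\subseteq [n]\setminus(A\cup B)$ of size $3k-n$; but $D$ must indeed avoid both $A$ and $B$ (since $A'\cap A=\emptyset$ and $B'\cap B=\emptyset$), and $|[n]\setminus(A\cup B)|=n-2k+|A\cap B|$, which for $n=2k+1$ and $|A\cap B|=1$ equals $2$, whereas $3k-n=k-1\ge 3$ once $k\ge 4$. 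Concretely, in $K(9,4)$ with $u=\{1,2,3,4\}$, $A=\{1,5,6,7\}$, $B=\{2,5,8,9\}$, no such $D$ exists. A short path does exist --- take $A'=A$ and $B'=\{1,3,6,7\}$, so that $(A,B')$ satisfies your observation --- but it is found by fixing one endpoint and choosing the other shift to overlap $A$ heavily, not by your $D$-construction.

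The second, equally important gap is that the case analysis in your final paragraph is asserted rather than carried out: the claims that ``one can still pick a $y\in u$ avoided by both shifts while leaving room for the required intersections'' and that ``a single shift produces a short path'' in the extreme case are exactly the statements requiring proof, and they are delicate precisely when $n=2k+1$, where the slack you repeatedly invoke is minimal. To make the argument rigorous you would need to exhibit the shifts explicitly in each case (allowing $A'=A$ appears to be essential, as the example shows), or replace the whole reduction with a different mechanism.
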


    \begin{lem}{\label{toughone}}
    Let $G$ be a non-bipartite $\ell$-regular graph.  Assume the edge-connectivity of $G$ is $\ell$, and the only edge cuts of $\ell$ edges are the $\ell$ edges incident to a vertex. Then for any vertex cut $S$, $|S| > c(G\setminus S)$. In particular, $t(K(n,k))>1$ for any $n\ge 2k+1$.  
    \end{lem}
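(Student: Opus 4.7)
\medskip

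The plan is to let $S$ be a vertex cut with components $C_1, \dots, C_c$ of $G \setminus S$, where $c = c(G\setminus S) \geq 2$, and play the edge-count from two sides. For each $i$, the edges between $C_i$ and $V(G)\setminus C_i$ form an edge cut (since $S$ separates the components, these edges all go to $S$), so by the edge-connectivity assumption $e(C_i,S) \geq \ell$. Summing gives $\sum_i e(C_i,S) \geq c\ell$. On the other hand, each vertex of $S$ has degree $\ell$, so $\sum_i e(C_i,S) \leq e(S,V\setminus S) \leq \ell|S|$, whence $|S| \geq c$. The goal is then to rule out equality.

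Assume for contradiction $|S| = c$. Equality in both inequalities forces (i) $S$ to be independent, and (ii) $e(C_i,S) = \ell$ for every $i$, i.e. each $(C_i, V\setminus C_i)$ realizes the edge-connectivity. By hypothesis, every such minimum edge cut consists of all edges incident to some single vertex $v$. I would then split into two cases depending on whether this $v$ lies in $C_i$ or in $V\setminus C_i$. If $v \in C_i$, then $v$ has no neighbors inside $C_i$, so since $C_i$ is connected we must have $C_i = \{v\}$ — a singleton component. If $v \notin C_i$, then since all edges from $C_i$ reach $S$ we must have $v \in S$ with $N(v) \subseteq C_i$; call this vertex $w_i$.

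In the second case, consider $C_i \cup \{w_i\}$: the independence of $S$ together with $N(w_i) \subseteq C_i$ and $e(C_i,S\setminus\{w_i\}) = 0$ means this set has no edges to its complement in $G$. Connectivity of $G$ forces $V(G) = C_i \cup \{w_i\}$, so $S = \{w_i\}$ and $c = 1$, contradicting $c \geq 2$. Hence every $C_i$ is a singleton, and $G$ has vertex set $S \cup \{v_1, \dots, v_c\}$ with $S$ independent and the $v_i$'s pairwise non-adjacent (being distinct components of $G\setminus S$). This makes $G$ bipartite with parts $S$ and $\{v_1,\dots,v_c\}$, contradicting the non-bipartite hypothesis. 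I expect this last structural step — correctly enumerating the two subcases from the ``trivial edge cut'' assumption and extracting the bipartition — to be the main technical point; the arithmetic is routine, but one has to carefully track which $S$-vertices connect to which components.

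For the ``in particular'' statement, I would invoke the cited results \cite{MR2828414, WATKINS197023} (together with vertex- and edge-transitivity of $K(n,k)$) to obtain that $K(n,k)$ is $\ell$-edge-connected with only trivial minimum edge cuts, and note that $K(n,k)$ is non-bipartite for $n \geq 2k+1$ (its chromatic number $n-2k+2$ is at least $3$). The lemma then gives $|S|/c(K(n,k)\setminus S) \geq (c+1)/c > 1$ for every vertex cut $S$, so taking the minimum yields $t(K(n,k)) > 1$.
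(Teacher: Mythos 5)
Your proof is correct and follows essentially the same route as the paper: counting $e(S,G\setminus S)$ from both sides to get $|S|\ge c$, then using the equality analysis (singleton components plus independent $S$) to force a bipartition and contradict non-bipartiteness, with the same citations and chromatic-number argument for $K(n,k)$. The only difference is that you spell out the case analysis showing a trivial minimum edge cut forces each component to be a singleton, a step the paper leaves implicit.
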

    \begin{proof}
    We have $ c(G\setminus S)\ell \le e(S, G \setminus S)$ for any vertex cut $S$, since the edge-connectivity is equal to the degree $\ell$. Equality occurs if and only if the components of $G\setminus S$ are singletons.  Furthermore, $e(S, G \setminus S) \le \ell|S|$, with equality occurring if and only if $S$ is an independent set. Combining yields $\frac{|S|}{c(G\setminus S)} \ge 1$, with equality if and only if $G$ is bipartite.  The last statement holds since the edge-connectivity of $K(n,k)$ is $\ell$ (see \cite[Ch. 3]{AGT} or \cite{MR2567957}) and $\chi(K(n,k)) \ge 3$.
    \end{proof}
    \begin{lem}\label{trees}
    Let $S$ be a vertex cut achieving the toughness of $K(n,k)$. Then every component in $K(n,k)\setminus S$ is either a singleton, $K_2$, or is biconnected.
    \end{lem}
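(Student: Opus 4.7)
The plan is to prove the contrapositive: if a component $C$ of $K(n,k)\setminus S$ has a cut vertex (so $C$ is not a singleton, not $K_2$, and not biconnected), then $S$ cannot attain $t(K(n,k))$. This is a standard ``shift a cut vertex into $S$'' argument, and the only nontrivial ingredient is that Lemma \ref{toughone} guarantees $t(K(n,k)) > 1$.

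More concretely, suppose for contradiction that some component $C$ of $K(n,k)\setminus S$ contains a cut vertex $v$. Let $S' = S \cup \{v\}$, so $|S'| = |S| + 1$. Removing $v$ from $C$ splits $C$ into at least two pieces, while leaving every other component of $K(n,k)\setminus S$ untouched; consequently
\[
c(K(n,k)\setminus S') \;\geq\; c(K(n,k)\setminus S) + 1.
\]
Write $c = c(K(n,k)\setminus S)$ for brevity. By Lemma \ref{toughone}, the toughness of $K(n,k)$ is strictly greater than $1$, and since $S$ attains it, $|S|/c = t(K(n,k)) > 1$, i.e.\ $|S| > c$. The elementary inequality
\[
\frac{|S|+1}{c+1} \;<\; \frac{|S|}{c} \qquad \text{whenever } |S| > c > 0
\]
then yields
\[
\frac{|S'|}{c(K(n,k)\setminus S')} \;\leq\; \frac{|S|+1}{c+1} \;<\; \frac{|S|}{c} \;=\; t(K(n,k)),
\]
contradicting the definition of toughness, provided $S'$ is still a vertex cut. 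But this is automatic: $K(n,k)\setminus S'$ has at least $c + 1 \geq 3$ components.

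The only potential subtlety is the classification ``singleton, $K_2$, or biconnected.'' A connected graph on at least three vertices is biconnected precisely when it has no cut vertex, while $K_1$ and $K_2$ trivially have no cut vertex; so the argument above, which rules out cut vertices in any component, gives exactly the stated trichotomy. There is no real obstacle here — the lemma is essentially an immediate corollary of Lemma \ref{toughone}, and I would simply present the above chain of inequalities as the proof.
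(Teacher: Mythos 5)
Your proposal is correct and is essentially identical to the paper's proof: both move the cut vertex $v$ into $S$, use Lemma \ref{toughone} to get $|S| > c$, and conclude via $\frac{|S|+1}{c+1} < \frac{|S|}{c}$. No issues.
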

    \begin{proof}
    The previous lemma guarantees that $$ \frac{|S|}{c(K(n,k)\setminus S)} > 1.$$   Let $\C$ be a component in $K(n,k)\setminus S$ on at least 3 vertices.  Assume to the contrary that deleting vertex $v$ in $\C$ disconnects $\C$.   Then by adding $v$ to $S$,
    $$\frac{|S \cup \{v\}|}{c(K(n,k)\setminus (S \cup \{v\}))} \leq \frac{|S |+1}{c(K(n,k)\setminus S)+1} < \frac{|S|}{c(K(n,k)\setminus S)},$$
    contradicting $S$ achieving the toughness.
    \end{proof}
    
    In particular, Lemma \ref{trees} implies that $K(n,k)\setminus S$ cannot contain a tree on more than two vertices. 
    
    
    \section{Quadratic upper bound}\label{sec3}
    We will first show that our toughness conjecture holds for fixed $k$ and sufficiently large $n$. 
    
    \begin{thm}\label{TK}
    Let $n\geq 2k+1$ and $k \geq 3$. Let $S$ be a vertex cut of $K(n,k)$ and $c=c(K(n,k)\setminus S)$. Then either \[\binom{n-k-1}{k-1}+1\leq c\leq \binom{n-1}{k-1}-\binom{n-k-1}{k-1}+1,\]
    or $$\frac{|S|}{c}\geq \frac{n}{k}-1,$$ with equality if and only if $S$ is the complement of a maximum independent set.  
    \end{thm}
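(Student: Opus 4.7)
The plan is to use the ``representative'' reduction: from each component $C_1,\dots,C_c$ of $K(n,k)\setminus S$, pick a vertex $v_i\in C_i$, obtaining $T=\{v_1,\dots,v_c\}$. Any two vertices in distinct components of $K(n,k)\setminus S$ are non-adjacent in $K(n,k)$ (else they would share a component), so $T$ is an independent set in $K(n,k)$, equivalently an intersecting family of $k$-subsets, and Theorem~\ref{EKR} gives $c\le \binom{n-1}{k-1}$. The task then splits into two regimes of $c$ flanking the stated middle range.

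In the upper regime $c\ge \binom{n-1}{k-1}-\binom{n-k-1}{k-1}+2$, the family $T$ exceeds the Hilton--Milner bound, so by the contrapositive of Theorem~\ref{HM} we have $T\subseteq \F(y)$ for some $y\in [n]$. To upgrade this from $T$ to all of $V\setminus S$, I would swap one representative at a time: if $T$ and $T'$ differ in a single vertex and $T\subseteq \F(y), T'\subseteq \F(y')$ with $y\ne y'$, then every element of $T\cap T'$ contains both $y$ and $y'$, forcing $c-1=|T\cap T'|\le \binom{n-2}{k-2}$. However, $c-1\ge \binom{n-1}{k-1}-\binom{n-k-1}{k-1}+1=\binom{n-2}{k-2}+\bigl(\binom{n-2}{k-1}-\binom{n-k-1}{k-1}\bigr)+1>\binom{n-2}{k-2}$, using $\binom{n-2}{k-1}\ge \binom{n-k-1}{k-1}$ for $k\ge 2$. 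This contradiction gives $y=y'$, and iterating single swaps across all components produces a single $y$ for which every vertex of every component contains $y$, i.e., $V\setminus S\subseteq \F(y)$. Hence $|S|\ge \binom{n-1}{k}$ while $c\le \binom{n-1}{k-1}$, so
$$ \frac{|S|}{c}\ge \frac{\binom{n-1}{k}}{\binom{n-1}{k-1}}=\frac{n-k}{k}, $$
with equality iff $V\setminus S=\F(y)$ is a maximum independent set and $S$ its complement.

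In the lower regime $c\le \binom{n-k-1}{k-1}$, my plan is to invoke the (known) fact that the vertex connectivity of $K(n,k)$ equals its degree $\ell=\binom{n-k}{k}$ for $n\ge 2k+1$, with the minimum vertex cuts being exactly the vertex neighborhoods. Then $|S|\ge \ell=\frac{n-k}{k}\binom{n-k-1}{k-1}\ge \frac{n-k}{k}c$, so $|S|/c\ge (n-k)/k$. Equality would simultaneously require $|S|=\ell$ and $c=\binom{n-k-1}{k-1}$; but $|S|=\ell$ forces $S=N(u)$ for some vertex $u$, and then Lemma~\ref{discon} yields only $c=2$ components, which contradicts $\binom{n-k-1}{k-1}\ge k\ge 3$ in our parameter range. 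Thus the inequality is strict in this regime. The main obstacle is the uniqueness-of-$y$ step in the upper regime, which hinges on the swap-and-intersect inequality; a secondary concern is the availability of the precise super-connectivity result for $K(n,k)$, since otherwise the lower-regime bound must be obtained via Theorem~\ref{HMP} combined with Lemma~\ref{minlemma} for a balanced spectral split, with components that are mostly singletons handled instead through the Kruskal--Katona bound of Lemma~\ref{KKBounds}.
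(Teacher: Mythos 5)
Your proposal is correct and follows essentially the same route as the paper: the same split into the regime $c\le\binom{n-k-1}{k-1}$ (handled via the vertex connectivity of $K(n,k)$ and Lemma~\ref{discon}) and the regime $c\ge\binom{n-1}{k-1}-\binom{n-k-1}{k-1}+2$ (handled via one representative per component, Hilton--Milner, and a switching argument). The only organizational difference is that you run the switch-and-compare-central-elements step inline to conclude $V\setminus S\subseteq\F(y)$ directly, whereas the paper packages that same idea as Lemma~\ref{switch}, uses it to force every component to be a singleton, and then finishes with Erd\H{o}s--Ko--Rado.
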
  
    \begin{proof}
    Let $S$ be a vertex cut and $c = c(K(n,k)\setminus S)$. Assume first that $c\le \binom{n-k-1}{k-1}$. If $|S| = \binom{n-k}{k}$, then structural properties mentioned above tell us that $S = N(u)$ for some vertex $u$, and so Lemma \ref{discon} implies 
    $$\frac{|S|}{c(K(n,k)\setminus S)} =\frac{\binom{n-k}{k}}{2} >\frac{n}{k}-1$$ 
    for $n\ge2k+1$ and $k\ge3$.   If instead  $|S|>\binom{n-k}{k}$, then
    $$\frac{|S|}{c(K(n,k)\setminus S)} >\frac{\binom{n-k}{k}}{c} \ge\frac{n}{k}-1,$$
    and we are done. Now assume $c> \binom{n-1}{k-1}-\binom{n-k-1}{k-1}+1$. If $K(n,k)\setminus S$ contains a non-singleton component, then choosing one vertex from each component to be in $Q$ gives a contradiction by Lemma \ref{switch}.
    Hence by Theorem \ref{EKR}, $K(n,k)\setminus S$ contains at most $\binom{n-1}{k-1}$ singletons. Then
 $$\frac{|S|}{c} \ge \frac{\binom{n}{k}-\binom{n-1}{k-1}}{\binom{n-1}{k-1}} =\frac{\binom{n}{k}}{\binom{n-1}{k-1}} - 1 = \frac{n}{k}-1.$$
    Note that equality can only occur when $c> \binom{n-1}{k-1}-\binom{n-k-1}{k-1}+1$, in which case it follows by Theorem \ref{EKR} that $S$ is the complement of a maximum independent set of size $\binom{n-1}{k-1}$.
    \end{proof}
    
    \begin{cor}
    For $k\geq 3$ and 
    \[n > \frac{2}{\ln(2)}k^{2}+\left(2-\frac{3}{\ln(2)}\right)k+\frac{1}{\ln(2)} \geq \frac{2^{\frac{1}{k-1}}(2k)-1}{2^{\frac{1}{k-1}}-1},\]
    we have
    $$t(K(n,k))=\frac{n}{k}-1.$$ Moreover, if $S$ is a vertex cut of $K(n,k)$ such that $\frac{|S|}{c(K(n,k)\setminus S)}=\frac{n}{k}-1$, then $S$ is the complement of a maximum independent set. 
    \end{cor}
    \begin{proof}
    For $k\geq 3$, we have
    \[n>\frac{2^{\frac{1}{k-1}}(2k)-1}{2^{\frac{1}{k-1}}-1}.\]
    This is equivalent to
    \begin{equation}{\label{equ1}}
     2(n-2k)^{k-1} > (n-1)^{k-1}.
    \end{equation}
    For $k \ge 3$,
    $$2\binom{n-k-1}{k-1} > \frac{2(n-2k)^{k-1}}{(k-1)!} \text{\hspace{.2in} and\hspace{.2in}}  \frac{(n-1)^{k-1}}{(k-1)!} > \binom{n-1}{k-1}.$$
    Equation \eqref{equ1} now implies
    $$2\binom{n-k-1}{k-1} > \binom{n-1}{k-1}.$$ 
    This is equivalent to 
    $$ \binom{n-k-1}{k-1}+1 > \binom{n-1}{k-1}-\binom{n-k-1}{k-1}+1.$$
    Note these are the bounds for $c$ in Theorem \ref{TK}. It can be verified using a two term Taylor approximation that
    \[\frac{2}{\ln(2)}k^{2}+\left(2-\frac{3}{\ln(2)}\right)k+\frac{1}{\ln(2)} \geq \frac{2^{\frac{1}{k-1}}(2k)-1}{2^{\frac{1}{k-1}}-1},\] finishing the proof.
    \end{proof}
    

    
    \section{Toughness of \texorpdfstring{$K(n, 3)$}{K(n,3)}}\label{sec4}
    
    \begin{thm}
    For $n\ge9$, $t(K(n,3)) =\frac{n}{3}-1$. Moreover, if $S$ is a vertex cut of $K(n,3)$ such that $\frac{|S|}{c(K(n,3)\setminus S)} = \frac{n}{3}-1$,  then $S$ is the complement of a maximum independent set of $K(n,3)$.
    \end{thm}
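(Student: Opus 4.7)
The plan is to reduce to a small finite set of cases via Theorem~\ref{TK} and then eliminate those cases with the spectral bound. Specifically, we aim to show that any vertex cut $S$ with $|S|/c \leq (n-3)/3$, where $c := c(K(n,3)\setminus S)$, must be the complement of a maximum independent set; since this complement is itself a vertex cut whose removal leaves $\binom{n-1}{2}$ singletons and therefore realizes ratio exactly $(n-3)/3$, both the toughness value and the uniqueness claim follow. By Theorem~\ref{TK} with $k = 3$, it suffices to handle the ``middle range''
$$\binom{n-4}{2} + 1 \leq c \leq \binom{n-1}{2} - \binom{n-4}{2} + 1 = 3n - 8,$$
since outside this range Theorem~\ref{TK} already delivers either a strict inequality or the required characterization of $S$.

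A short computation shows this interval is non-empty exactly when $n^{2} - 15n + 38 \leq 0$, i.e.\ for $n \leq 11$. Hence for $n \geq 12$ there is nothing further to prove, and the substantive work is confined to $n \in \{9, 10, 11\}$, where the respective middle ranges are $[11, 19]$, $[16, 22]$, and $[22, 25]$.

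For these remaining cases we argue by contradiction, assuming $|S| \leq (n-3)c/3$ with $c$ in the middle range. Using $c \leq 3n - 8$, one checks $|S| \leq N - 2c$ where $N = \binom{n}{3}$, so Lemma~\ref{minlemma} applies to the component-size sequence of $K(n,3) \setminus S$ and yields a partition into blocks $S_1, S_2$ with $|S_1|, |S_2| \geq c - 1$. Minimizing $|S_1||S_2|$ subject to this constraint and $|S_1| + |S_2| = N - |S|$ gives $|S_1|\,|S_2| \geq (c-1)(N - |S| - c + 1)$, so the spectral bound \eqref{HMP1} yields
$$|S| \geq \frac{8(n-4)(n-6)(c-1)(N - |S| - c + 1)}{3(n-2)^{3}}.$$
Substituting the standing hypothesis $|S| \leq (n-3)c/3$ into the right-hand side produces, for each of the finitely many admissible pairs $(n,c)$, a numerical inequality that fails, yielding the contradiction. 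The main obstacle is this final quantitative check; direct calculation shows the spectral lower bound exceeds $(n-3)c/3$ by a comfortable margin throughout each range (for instance by roughly $38$ at $c=11$ and by roughly $21$ at $c=19$ when $n=9$), so the verification is routine rather than delicate.
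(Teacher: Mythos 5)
Your proposal is correct and follows essentially the same route as the paper: Theorem~\ref{TK} confines the problem to $9\le n\le 11$ with $c$ in the stated middle range, and then Lemma~\ref{minlemma} combined with the spectral bound \eqref{HMP1} forces a lower bound on $|S|$ that contradicts $|S|\le(\frac{n}{3}-1)c$. The only difference is bookkeeping — you minimize the quadratic $(c-1)(N-|S|-c+1)$ pointwise in $c$, while the paper plugs in uniform endpoint bounds for $|S|$ and $\min(|S_1|,|S_2|)$ over each range — and your numerical margins check out.
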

    
    \begin{proof}
    First, the bounds for $c$ in Theorem \ref{TK} do not agree for $n\geq 12$. Consider when $8\le n\le 11$. In all cases, we assume there exists a vertex cut $S$ that is not the complement of a maximum independent set such that $\frac{|S|}{c(K(n,3)\setminus S)}\leq \frac{n}{3}-1$ to obtain a contradiction.
    
    For $n=11$, by Theorem \ref{TK}, we have that $22\le c(K(11,3)\setminus S)\le 25$. Therefore, $\frac{|S|}{c(K(11,3)\setminus S)}\le\frac{8}{3}$ implies $56 \le |S|\le 66$. If $S$ is the neighborhood of a vertex, then $c(K(11,3)\setminus S)=2$, contradicting the lower bound on $c(K(11,3)\setminus S)$. Therefore, $S$ is not the neighborhood of a vertex, and thus by Lemma \ref{discon}, $|S|\neq 56$. Thus $57 \le |S|\le 66$. Furthermore, we have  $|K(11,3)\setminus S| =165-|S|\ge 165-66 =99>2(25)\ge 2c(K(11,3)\setminus S) $, so we can apply Lemma \ref{minlemma}.  There exist subsets of vertices $S_{1}$ and $S_{2}$ of $K(11,3)\setminus S$ with no edges between them, such that $\min(|S_1|,|S_2|)\ge 21$. Equation \eqref{HMP1} implies
    $$|S|\ge \frac{280}{2187}|S_1||S_2| \ge \frac{280}{2187}(21)(99-21) > 209,$$
    which is a contradiction with $|S|\le 66$.  
    
    The graph $K(10,3)$ is 35-regular and has 120 vertices. By Theorem \ref{TK}, we have that $16\le c(K(10,3)\setminus S)\le 22$. As $S$ is not the neighborhood of a vertex,  $\frac{|S|}{c(K(10,3)\setminus S)}\le\frac{7}{3}$ implies $36\le |S|\le 51$.  Furthermore, we have  $|K(10,3)\setminus S| =120-|S|\ge 120-51 =69 >2(22)\ge 2c(K(10,3)\setminus S) $, so we can apply Lemma \ref{minlemma}. There exist subsets of vertices $S_{1}$ and $S_{2}$ of $K(10,3)\setminus S$ with no edges between them, such that $\min(|S_1|,|S_2|)\ge 15$. Equation \eqref{HMP1} implies
    $$|S|\ge \frac{|S_1||S_2|}{8} \ge \frac{1}{8}(15)(69-15) >101,$$
    a contradiction with $|S|\le 51$.
    
    The graph $K(9,3)$ is 20-regular and has 84 vertices. By Theorem \ref{TK}, we have that $11\le c(K(9,3)\setminus S)\le 19$. As $S$ is not the neighborhood of a vertex,  $\frac{|S|}{c(K(9,3)\setminus S)}\le 2$ implies $21\le |S|\le 38$.  Furthermore, we have  $|K(9,3)\setminus S| =84-|S|\ge 84-38 =46 >2(19)\ge 2c(K(9,3)\setminus S) $, so we can apply Lemma \ref{minlemma}. There exist subsets of vertices $S_{1}$ and $S_{2}$ of $K(9,3)\setminus S$ with no edges between them, such that  $\min(|S_1|,|S_2|)\ge 10$. Equation \eqref{HMP1} implies
    $$|S|\ge \frac{40}{343}|S_1||S_2| \ge \frac{40}{343}(10)(46-10) > 41,$$
    a contradiction with $|S|\le 38$.
    \end{proof}

    For $K(8,3)$, we must split into casework on $c$.
    
    \begin{thm}
    We have $t(K(8,3)) =\frac{5}{3}$. Moreover, if $S$ is a vertex cut of $K(8,3)$ such that $\frac{|S|}{c(K(8,3)\setminus S)} = \frac{5}{3}$,  then $S$ is the complement of a maximum independent set of $K(8,3)$.
    \end{thm}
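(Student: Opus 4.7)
My plan is to proceed by contradiction: fix a vertex cut $S$ of $K(8,3)$ with $\frac{|S|}{c} \leq \frac{5}{3}$, where $c := c(K(8,3)\setminus S)$, and assume $S$ is not the complement of a maximum independent set. (An explicit complement of $\F(x)$ already exhibits the ratio $\tfrac{35}{21}=\tfrac{5}{3}$, so only the lower bound and uniqueness remain.) By Theorem \ref{TK}, we must have $c\in\{7,8,\ldots,16\}$, and I will derive a contradiction in each case.

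For $c\in\{7,\ldots,15\}$, the inequality $56-|S|\geq 56-\tfrac{5c}{3}\geq 2c$ holds, so Lemma \ref{minlemma} produces a partition $K(8,3)\setminus S = S_1\sqcup S_2$ into unions of components with $\min(|S_1|,|S_2|)\geq c-1$. The spectral bound \eqref{HMP1} specializes to
\[|S|\geq \frac{8(c-1)(56-|S|-(c-1))}{81},\]
and a short numerical check shows the right-hand side exceeds $\tfrac{5c}{3}$ for every $c\in\{7,\ldots,14\}$ and for $c=15,\ |S|\leq 24$, yielding an immediate contradiction. Only the borderline case $c=15$, $|S|=25$ (where the ratio is exactly $\tfrac{5}{3}$) survives this initial sweep.

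For $c=16$ (where Lemma \ref{minlemma} just fails, since $56-|S|<2c$) and for the borderline case $c=15$, $|S|=25$, I would combine Lemma \ref{trees} with the observation that $K(8,3)$ is triangle-free (as $n<3k$) and hence has girth $4$, so every component of $K(8,3)\setminus S$ is a singleton, a $K_2$, or a biconnected subgraph of size at least $4$. Letting $s, e, b$ count the components of each type and $V_b$ denote the total number of vertices in biconnected components, I impose the balance constraints $s+e+b=c$, $s+2e+V_b=56-|S|$, $V_b\geq 4b$, together with the edge-counting identity $\sum_i m_i = e(S) + 280 - 10|S|$ (where $m_i$ is the number of internal edges of the $i$th component) and Mantel's bound $m_i \leq n_i^2/4$ on each triangle-free component. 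These should severely restrict the admissible component-size profiles. For each surviving profile I would re-apply the spectral bound \eqref{HMP1} to a better-balanced partition of the components, now tailored to the actual sizes, or invoke Hilton--Milner-type constraints on the independent set of component representatives, in the spirit of Lemma \ref{switch}, to close out the case.

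The main obstacle I expect is precisely the borderline case $c=15$, $|S|=25$: the ratio hits $\tfrac{5}{3}$ exactly, the spectral bound is already tight, and Lemma \ref{switch} gives the noncommittal $|Q|\leq 16$ rather than a strict inequality. Ruling this case out will likely require threading together the Mantel-based edge-counting restrictions on the component profile with an analysis of how the independent set of component representatives (of size $15$) must sit inside a Hilton--Milner extremal family of $\binom{[8]}{3}$, to show that no such cut can simultaneously satisfy all the constraints.
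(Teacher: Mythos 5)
Your overall architecture matches the paper's: Theorem \ref{TK} confines $c$ to $\{7,\ldots,16\}$, Lemma \ref{minlemma} together with \eqref{HMP1} disposes of the small-$c$ range (the paper stops its spectral sweep at $c=14$ and treats all of $c=15$ by edge counting, whereas you push the sweep to also kill $c=15$ with $|S|\le 24$; that refinement is correct), and the residual cases are attacked by counting edges leaving components. Your per-component bounds are also sound: every non-singleton, non-$K_2$ component is biconnected and triangle-free (three pairwise disjoint $3$-sets would need $9$ elements), and since such a component has at most $17$ vertices when $c=15$, Mantel gives $e(\C,S)=10|\C|-2e(\C)\ge 10|\C|-2\lfloor |\C|^2/4\rfloor\ge 26$, reproducing the constant the paper extracts from Lemma \ref{laplacian}. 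One small misstatement: for $c=16$, Lemma \ref{minlemma} fails only when $|S|\ge 25$; for $|S|\le 24$ it still applies and gives a contradiction, but since you route all of $c=16$ through phase two this is harmless.

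The genuine gap is in closing the borderline cases, and you have located it but not resolved it. The constraint system you list --- the balance equations, the identity $\sum_i m_i=e(S)+280-10|S|$, Mantel, a re-applied \eqref{HMP1}, and Lemma \ref{switch} on component representatives --- does \emph{not} eliminate singleton-heavy profiles. Concretely, for $c=15$, $|S|=25$, the profile of $14$ singletons plus one biconnected $17$-vertex component with roughly $30$ internal edges satisfies every condition you impose: the edge identity only requires $\sum_i m_i\ge 30\le\lfloor 17^2/4\rfloor$, the best-balanced split ($17$ versus $14$) gives $|S|\ge\tfrac{8}{81}\cdot 17\cdot 14\approx 23.5<25$, and the $15$ representatives give $|Q|=15\le 16$, exactly the noncommittal bound you flag. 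The missing tool is Lemma \ref{KKBounds} (Kruskal--Katona): already $a=7$ singleton components force $|N(Q)|\ge\binom{6}{3}+\binom{4}{2}=26>25\ge|S|$, so $a\le 6$. With that cap, writing $V_b=1+a+2b$ one checks $b\le 3$ and $\sum_i m_i\le (15-a-b)+\sum_{\text{big}}\lfloor n_i^2/4\rfloor\le 28<30$, so the edge identity is violated and the case closes (the paper reaches the same end via $10a+18b+26(15-a-b)\le 221$). Without Kruskal--Katona, or some equivalent lower bound on the neighborhood of a large family of singleton components, your cases $c=15$, $|S|=25$ and $c=16$ remain open --- as your final paragraph itself concedes.
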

    
    \begin{proof}
    The graph $K(8,3)$ is 10-regular and has 56 vertices. Assume there exists a vertex cut $S$ achieving toughness such that $S$ is not the complement of a maximum independent set and $\frac{|S|}{c(K(8,3)\setminus S)}\leq \frac{5}{3}$. By Theorem \ref{TK}, we have that $7\le c(K(8,3)\setminus S)\le 16$. As $S$ is not the neighborhood of a vertex,  $\frac{|S|}{c(K(8,3)\setminus S)}\le\frac{5}{3}$ implies $11\le |S|\le 26$. 
    
    
    If $c(K(8,3)\setminus S)=7$, then $|S|=11$. Furthermore, we have $|K(8,3)\setminus S| = 56-11= 45>2(7) = 2c(K(8,3)\setminus S)$, so we can apply Lemma \ref{minlemma}. There exist subsets of vertices $S_{1}$ and $S_{2}$ of $K(8,3)\setminus S$ with no edges between them, such that $\min (|S_1|, |S_2|) \ge 6$. Equation \eqref{HMP1} implies
    $$|S|\ge \frac{8}{81}|S_1||S_2| \ge \frac{8}{81}(6)(45-6) >  23,$$
    a contradiction with $|S|=11$. A similar argument shows a contradiction for $8 \le c(K(8,3)) \le 14$.
    
    
    
    If $c(K(8,3)\setminus S)=15$, then $16\le |S|\le 25$.
    Let $a$ and $b$ denote the number of singletons and $K_2$'s in $K(8,3)\setminus S$, respectively. Let $\C$ be a component of $K(8,3)\setminus S$. Note the girth of $K(8,3)$ is $4$ \cite{girth_ref}. If $|\C|=1,2,4$, then  $e(\C,S)\ge 10,18,32$. By Lemma \ref{trees}, there are no components of order $3$. As each component must contain at least one vertex, we find that $|\C|\le |K(8,3)\setminus S|-14\le 26$. The lower bound in Lemma \ref{laplacian} is increasing for $|\C|\leq 28$. Thus if $|\C|\geq 4$, then \[e(\C,S)\geq \ceil*{\dfrac{7\cdot 4(56-4)}{56}}\geq 26.\]
    Since there are $10$ edges from each singleton, $18$ edges from each $K_2$, and at least $28$ edges for each other component, we have
    \[10a+18b+26(15-a-b)\leq e(S,K(8,3)\setminus S)\leq \floor*{\frac{16|S|(56-|S|)}{56}}\le 221.\]
    Finally, if $a\geq 7$,  Lemma \ref{KKBounds} implies the neighborhood contains at least 26 vertices, contradicting $|S|\leq 25$. This reduces to the system
    \[\begin{cases}
        0\leq a\leq 6,\\
        0\leq b\leq 15-a,\\
        10a+18b+26(15-a-b)\leq 221,
    \end{cases}
    \]
    which has no integer solutions.
    
    If $c(K(8,3)\setminus S)=16$, then $17\le |S|\le 26$. Let $a,b$ denote the number of singletons and $K_2$'s in $K(8,3)\setminus S$. By a similar analysis of $e(\C,S)$ for a component $\C$ of $K(8,3)\setminus S$, we have the system,
    \[\begin{cases}
        0\leq a\leq 8,\\
        0\leq b\leq 16-a,\\
        10a+18b+26(16-a-b)\leq 222,
    \end{cases}
    \]
    which has no integer solutions.\end{proof}
    
    For $K(7,3)$, we appeal to an edge-counting argument and computer search.
    
    \begin{thm}
    We have $t(K(7,3))=\frac{4}{3}$. Moreover, if $S$ is a vertex cut of $K(7,3)$ such that $\frac{|S|}{c(K(7,3)\setminus S)} = \frac{4}{3}$,  then $S$ is the complement of a maximum independent set of $K(7,3)$.
    \end{thm}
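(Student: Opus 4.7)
The plan is to follow the $K(8,3)$ template (using Theorem \ref{TK} to constrain $(c,|S|)$ and then exploiting edge counting), finishing the residual cases by a computer search since on the small graph $K(7,3)$ the spectral and combinatorial inequalities are no longer decisive on their own. The graph $K(7,3)$ is $4$-regular on $35$ vertices with independence number $15$, spectrum $\{4,2,-1,-3\}$ by Theorem \ref{knesereigenvalue}, and it is triangle-free since three pairwise disjoint $3$-subsets of $[7]$ would require $9$ distinct elements. Assume for contradiction that $S$ is a vertex cut with $|S|/c\le 4/3$ which is not the complement of a maximum independent set, where $c=c(K(7,3)\setminus S)$. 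Then Theorem \ref{TK} forces $c\in\{4,\ldots,13\}$; Lemma \ref{discon} excludes $S=N(u)$ (which would give $c=2$), and Lemma \ref{toughone} gives $|S|\ge c+1$. Combined with $|S|\le\lfloor 4c/3\rfloor\le 17$, this produces a short list of admissible pairs $(c,|S|)$.

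For each such pair I would run an edge-counting argument analogous to the one used for $c=15$ in the $K(8,3)$ case. Lemma \ref{laplacian} with the eigenvalues above gives $e(S,V\setminus S)\le |S|(35-|S|)/5$. In the other direction, Lemma \ref{trees} combined with triangle-freeness forces every component of $K(7,3)\setminus S$ to be a singleton, a $K_2$, or a biconnected triangle-free graph on at least four vertices. Writing $a$ and $b$ for the numbers of singletons and $K_2$-components and $d$ for the remainder, each singleton contributes $4$ edges to $S$, each $K_2$ contributes $6$, and each larger component contributes a number of edges bounded below via Mantel's theorem on its internal edge count. The $a$ singletons further form an independent set whose neighborhood lies in $S$, so Lemma \ref{KKBounds} together with the Kruskal--Katona expansion of $a$ yields an additional upper bound on $a$ in terms of $|S|$. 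Combining these estimates with $a+b+d=c$ and the vertex-count equation $a+2b+\sum_{\text{larger}}|\C_i|=35-|S|$ gives a small integer feasibility system for each admissible pair, eliminating most candidate tuples.

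The main obstacle will arise for the largest values of $c$, in particular $c\in\{12,13\}$, where the Laplacian upper bound and the component-based lower bound on $e(S,V\setminus S)$ become nearly tight, the Kruskal--Katona bound loses its slack, and a handful of configurations survive the purely combinatorial filtering. For these residual cases I would invoke a direct computer search: by the vertex- and edge-transitivity of $K(7,3)$ one may fix a reference vertex and enumerate candidate cuts of the prescribed size (or, dually, enumerate extensions of fixed singleton independent sets), a search space small enough to be exhausted. The computer verifies that no such $S$ exists outside the complements of maximum independent sets, yielding both $t(K(7,3))=4/3$ and the extremal characterization.
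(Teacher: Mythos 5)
Your skeleton matches the paper's: both assume a cut $S$ with $\frac{|S|}{c}\le\frac43$ that is not the complement of a maximum independent set, use Theorem \ref{TK} and Lemma \ref{toughone} to pin down the admissible pairs $(c,|S|)$ (the paper gets $5\le c\le 13$, $6\le|S|\le 17$), then run an edge-counting feasibility argument with the Laplacian bounds of Lemma \ref{laplacian}, the component taxonomy of Lemma \ref{trees}, and the Kruskal--Katona neighborhood bound of Lemma \ref{KKBounds}, finishing by computer. The substantive differences are in the strength of the filtering and the nature of the computation. First, you only invoke triangle-freeness and Mantel's theorem for the non-trivial components, but $K(7,3)$ (the odd graph $O_4$) has girth $6$; the paper uses this to rule out components of orders $3$, $4$, $5$ entirely (a biconnected graph on $3$--$5$ vertices would contain a cycle of length at most $5$) and to get the sharp lower bounds $e(\C,S)\ge 12,14$ for $|\C|=6,7$, whereas Mantel gives only $e(\C,S)\ge 4|\C|-2\lfloor|\C|^2/4\rfloor$ (e.g.\ $6$ for $|\C|=6$) and nothing useful for larger components, where the paper falls back on the $\mu_2$-Laplacian lower bound $\lceil 2t(35-t)/35\rceil$ which you omit; you also drop the trivial upper bound $4|S|$, which is the binding one for $|S|\le 15$. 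Second, because your inequalities are looser, more cases survive, and your fallback is a brute-force enumeration of actual vertex cuts of sizes up to $17$ in a $35$-vertex graph, i.e.\ on the order of $\sum_s\binom{35}{s}\approx 10^{9}$--$10^{10}$ subsets; this is plausibly feasible with pruning and the $S_7$-symmetry, but your claim that it is ``small enough to be exhausted'' is asserted rather than justified, and the symmetry reduction you gesture at (fixing a reference vertex) is not spelled out. The paper instead keeps the search tiny: for each $(c,|S|)$ it only enumerates integer partitions of $35-|S|$ into $c$ parts avoiding sizes $3,4,5$ and checks the linear conditions \eqref{7,3:check}, \eqref{Appendix:n1}, \eqref{Appendix:n2}, which runs in under a second. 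So your route would likely work, but using the girth-$6$ structure (or at least the per-component spectral lower bound) and searching over partitions rather than cuts is what makes the verification genuinely light.
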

    
    \begin{proof}
    
    Assume there exists a vertex cut $S$ achieving toughness such that $S$ is not the complement of a maximum independent set and $\frac{|S|}{c(K(7,3)\setminus S)}\leq \frac{4}{3}$. By Theorem \ref{TK}, we have that $5\le c(K(8,3)\setminus S)\le 13$. By Lemma \ref{toughone}, $\frac{|S|}{c(K(7,3)\setminus S)}\le\frac{4}{3}$ implies $6 \le |S| \le 17$. 
    
    Since $K(7,3)$ is $4$-regular, we have that $e(S,K(7,3)\setminus S)\leq 4|S|$. By Lemma \ref{laplacian}, we may improve this upper bound to
    \begin{equation}\label{7,3:ehigh}
        e(S,K(7,3)\setminus S)\leq \min\left(4|S|,\left\lfloor\dfrac{7|S|(35-|S|)}{35}\right\rfloor\right).
    \end{equation}
    Now for any component $\C$ of $K(7,3)\setminus S$, we will bound $e(\C,S)$ from below. By Lemma \ref{laplacian}, we have
    \[
        e(\C,S)\geq \dfrac{2|\C|(35-|\C|)}{35}.
    \]
    We will require tighter bounds for small $|\C|$. When $|\C|=1,2$, we trivially obtain $e(\C,S)=4,6$. Since $K(7,3)$ has girth $6$ \cite{girth_ref}, if $3\leq |\C|\leq 5$, then $\C$ is a tree violating Lemma \ref{trees}. Hence we do not consider when $3\leq |\C|\leq 5$. For $|\C|=6,7$, $e(\C)$ is maximized when $\C$ is a cycle. Since $K(7,3)$ is $4$-regular,
    \[e(\C,S)=4|\C|-2e(\C).\]
    Thus $e(\C,S)\geq 12,14$ for $|\C|=6,7$.   We combine this into a single function $f_1(|\C|)$ which provides a lower bound for $e(\C,S)$. Define
    \begin{equation}\label{7,3:f}
    f_1(t) = \begin{cases}
    4 & \text{if }t=1\\
    6 & \text{if }t=2\\
    12 & \text{if }t=6\\
    14 & \text{if }t=7\\
    \ceil{\frac{2t(35-t)}{35}} & \text{if } t\ge 8
    \end{cases}.\end{equation}
    
     Let $X_1,\ldots,X_c$ denote the components of $K(7,3)\setminus S$. Let $p_i=|X_i|$, and without loss of generality assume $p_1\leq \cdots \leq p_c$. Let $\mathcal{P}(m, j, T)$ denote the set of integer partitions of $m$ into $j$ parts such that the size of each part is not an element of $T$.
     By \eqref{7,3:ehigh} and \eqref{7,3:f}, $(p_1,\ldots,p_c)$ is a partition in $\mathcal{P}(35-|S|,c,\{3,4,5\})$ satisfying
     
    \begin{equation}\label{7,3:check}\sum_{i=1}^c f_1(p_i)\leq \sum_{i=1}^c e(X_i,S)=e(S,K(7,3)\setminus S)\leq \min\left(4|S|,\left\lfloor\dfrac{7|S|(35-|S|)}{35}\right\rfloor\right).\end{equation}
    
    Let $a$ and $b$ be the number of components of size $1$ and $2$ respectively. The size of the neighborhood of the $a$ singletons and $b$ $K_2$'s must be at most $|S|$. A computer search over all possible values of $c$ and $|S|$ confirms that no partitions satisfy these two conditions among other trivial conditions on $a,b$. See Section \ref{Appendix:p} for more details.
    \end{proof}
    
    \section{Toughness of \texorpdfstring{$K(n,4)$}{K(n,4)}}\label{sec5}
    
    \begin{thm}
    If $n\geq13$, then $t(K(n,4))=\frac{n}{4}-1$. Moreover, if $S$ is a vertex cut of $K(n,4)$ such that $\frac{|S|}{c(K(n,4)\setminus S)} = \frac{n}{4}-1$,  then $S$ is the complement of a maximum independent set of $K(n,4)$.
    \end{thm}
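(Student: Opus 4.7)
The plan is to mirror the strategy used for $K(n,3)$ in Section \ref{sec4}. Apply Theorem \ref{TK}: for any vertex cut $S$ with $c=c(K(n,4)\setminus S)$, either $|S|/c \geq n/4-1$ (with equality characterizing $S$ as the complement of a maximum independent set), or $c$ lies in the window $[\binom{n-5}{3}+1,\,\binom{n-1}{3}-\binom{n-5}{3}+1]$. A short binomial calculation shows this window is empty precisely when $2\binom{n-5}{3}>\binom{n-1}{3}$, which first holds at $n=22$. Hence Theorem \ref{TK} settles the case $n\geq 22$ immediately, and it remains to rule out the window-case for $13 \leq n \leq 21$.

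Fix such an $n$ and assume for contradiction that there exists a vertex cut $S$, not the complement of a maximum independent set, with $|S|/c \leq n/4-1$. Theorem \ref{TK} bounds $c$ from both sides, which together with $|S|\leq (n/4-1)c$ constrains $|S|$. Lemma \ref{discon} rules out $|S|=\binom{n-4}{4}$, since otherwise $S$ would be the neighborhood of a vertex, forcing $c=2$ and violating the lower bound on $c$; combined with the minimum vertex cut size $\binom{n-4}{4}$, this gives $|S|\geq \binom{n-4}{4}+1$. One verifies $|K(n,4)\setminus S| = \binom{n}{4}-|S| \geq \binom{n}{4}-(n/4-1)c > 2c$ throughout the window, so Lemma \ref{minlemma} produces a partition $S_1 \cup S_2$ of $K(n,4)\setminus S$ with no edges between the parts and $\min(|S_1|,|S_2|)\geq c-1$.

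Applying the spectral bound \eqref{HMP2} then gives
\[
|S|\;\geq\; \frac{6(n-8)(n-5)(c-1)\bigl(\binom{n}{4}-|S|-(c-1)\bigr)}{(n-3)(n-2)^{3}}.
\]
Combined with $|S|\leq (n/4-1)c$, this is a concrete numerical inequality in $c$ which one checks has no solution in the Theorem \ref{TK} window for each $n \in \{13,\ldots,21\}$. Since the right-hand side grows quadratically in the window parameters while the left-hand side only linearly, verification at the two endpoints $c=\binom{n-5}{3}+1$ and $c=\binom{n-1}{3}-\binom{n-5}{3}+1$ suffices.

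The main obstacle will be the case $n=21$, where the window $c\in[561,581]$ is narrowest and the toughness ratio $n/4-1=17/4$ is largest, making the upper bound on $|S|$ relatively loose. A direct numerical check nonetheless shows that the quadratic factor $(c-1)(|K(n,4)\setminus S|-(c-1))$ dominates by a substantial margin, since $\binom{21}{4}=5985$ dwarfs the $|S|$ allowed by the toughness bound, so the contradiction goes through uniformly across all nine values. Once the window-case is eliminated, equality $|S|/c = n/4-1$ must arise from the first case of Theorem \ref{TK}, forcing $S$ to be the complement of a maximum independent set, which is what remained to show.
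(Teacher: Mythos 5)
Your proposal is correct and follows essentially the same route as the paper: Theorem \ref{TK} confines $c$ to the window $[\binom{n-5}{3}+1,\binom{n-1}{3}-\binom{n-5}{3}+1]$, Lemma \ref{minlemma} applies because $\binom{n}{4}-\frac{n-4}{4}c\ge 2c$ holds on the window precisely when $n\ge 13$, and the spectral bound \eqref{HMP2} played against $|S|\le\frac{n-4}{4}c$ gives the contradiction; your endpoint check is legitimate since the difference between the concave quadratic lower bound and the linear upper bound is concave in $c$ and hence minimized at an endpoint of the window. The only difference is packaging: the paper finishes with a single symbolic comparison valid for all $n\ge 13$ (minimizing the quadratic at $c-1=\binom{n-1}{3}-\binom{n-5}{3}$ and comparing with $\frac{n-4}{4}\left(\binom{n-1}{3}-\binom{n-5}{3}+1\right)$), whereas you split off $n\ge 22$ via emptiness of the window and verify $13\le n\le 21$ numerically.
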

    
    \begin{proof}
    Suppose $S$ is a vertex cut of $K(n,4)$ that is not the complement of a maximum independent set such that $\frac{|S|}{c(K(n,4)\setminus S)}\le\frac{n}{4}-1$.
    Let $c=c(K(n,4)\setminus S)$. By Theorem \ref{TK}, we have $\binom{n-5}{3}+1\leq c\leq \binom{n-1}{3}-\binom{n-5}{3}+1$. Also,
    \begin{align}
    |S|&\le \dfrac{n-4}{4}c,\label{14,4:eq2}\\
    |K(n,4)\setminus S|&\ge\binom{n}{4} - \dfrac{n-4}{4}c.\label{14,4:eq3}
    \end{align}
    We would like $|K(n,4)\setminus S|\geq 2c$ in order to apply Lemma \ref{minlemma}. By the upper bound on $c$, this is satisfied when
    \begin{equation*}
    \binom{n}{4}\geq \dfrac{n+4}{4}\left(\binom{n-1}{3}-\binom{n-5}{3}+1\right).
    \end{equation*}
    The above equation holds if and only if $n\geq 13$.
    Thus we may apply Lemma \ref{minlemma} to create a vertex partition of $K(n,4)\setminus S$ into two vertex sets $S_1,S_2$ with no edges between $S_1,S_2$ and $\min(|S_1|,|S_2|)\geq c-1$. Now by \eqref{HMP2},
    \[
    |S|\geq \dfrac{6(n-8)(n-5)}{(n-3)(n-2)^3}|S_1||S_2|\geq \dfrac{6(n-8)(n-5)}{(n-3)(n-2)^3}(c-1)\left(|K(n,4)\setminus S|-(c-1)\right).
    \]
    Using the bounds on $c$ and \eqref{14,4:eq3}, we can rewrite the above lower bound on $|S|$ as
    \begin{align*}
        |S|&\geq \dfrac{6(n-8)(n-5)}{(n-3)(n-2)^3}(c-1)\left(\binom{n}{4}-\dfrac{n}{4}c+1\right)\\
        &= \dfrac{6(n-8)(n-5)}{(n-3)(n-2)^3}\dfrac{n}{4}(c-1)\left(\binom{n-1}{3}+\dfrac{4}{n}-1-(c-1)\right).
    \end{align*}
    Notice $c-1\in [\binom{n-5}{3},\binom{n-1}{3}-\binom{n-5}{3}]$ and $\binom{n-1}{3}+\frac{4}{n}-1 < \binom{n-1}{3}$. Then the lower bound of $|S|$ above is a quadratic in $c-1$ that is minimized when $c-1=\binom{n-1}{3}-\binom{n-5}{3}$. Thus,
    \[|S|\geq \dfrac{6(n-8)(n-5)}{(n-3)(n-2)^3}\dfrac{n}{4}\left(\binom{n-1}{3}-\binom{n-5}{3}\right)\left(\dfrac{4}{n}-1+\binom{n-5}{3}\right).\]
    Now from the upper bound on $c$ and \eqref{14,4:eq2}, we have the upper bound \[|S|\leq \frac{n-4}{4}\left(\binom{n-1}{3}-\binom{n-5}{3}+1\right).\]
    For $n\geq 13$, the lower bound of $|S|$ is strictly greater than the upper bound of $|S|$, yielding a contradiction.
    \end{proof}
    
    
    
    For $K(12,4)$, we will require Lemmas \ref{KKBounds} and \ref{switch2}.
    
    \begin{thm}
    We have $t(K(12,4))=2$. Moreover, if $S$ is a vertex cut of $K(12,4)$ such that $\frac{|S|}{c(K(12,4)\setminus S)} = 2$,  then $S$ is the complement of a maximum independent set of $K(12,4)$.
    \end{thm}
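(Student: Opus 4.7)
The plan is to follow the blueprint of the $K(n,4)$ proof for $n\geq 13$ but to supplement it with an edge-counting phase modelled on the $K(8,3)$ argument to handle the cases in which either Lemma~\ref{minlemma} fails or the spectral bound is too weak.

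Assume, for contradiction, that $S$ is a vertex cut of $K(12,4)$ with $|S|/c\leq 2$ (where $c:=c(K(12,4)\setminus S)$) and $S$ is not the complement of a maximum independent set. By Theorem~\ref{TK}, $c\in[36,131]$, and by Lemma~\ref{toughone}, $c+1\leq |S|\leq 2c$. Whenever $495-|S|\geq 2c$, Lemma~\ref{minlemma} yields a partition $S_1\cup S_2$ of $K(12,4)\setminus S$ with no edges between the parts and $\min(|S_1|,|S_2|)\geq c-1$. Substituting into \eqref{HMP2} with $n=12$ gives
\[
|S| \;\geq\; \frac{7}{375}(c-1)\bigl(495-|S|-(c-1)\bigr),
\]
which rearranges to $|S|\geq \frac{7(c-1)(496-c)}{7c+368}$. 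A direct computation shows this lower bound strictly exceeds $2c$ for every $c\leq 129$, producing the required contradiction in every such case where Lemma~\ref{minlemma} applies. The residual parameter pairs are $c\in\{130,131\}$ for any admissible $|S|$, together with $c\in[124,129]$ and $|S|\in(495-2c,\,2c]$.

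For each residual $(c,|S|)$, I would run an edge-counting argument. Let $a$, $b$, and $r$ denote the numbers of singleton, $K_2$, and biconnected components of size $\geq 3$ in $K(12,4)\setminus S$ (this trichotomy is ensured by Lemma~\ref{trees}), so $a+b+r=c$, and let $V_{\mathrm{other}}\geq 3r$ be the total vertex count of the last type, giving $a+2b+V_{\mathrm{other}}=495-|S|$. Each singleton contributes $70$ edges to $S$, each $K_2$ contributes $138$, and each biconnected component of size $\geq 3$ contributes at least $e(K_3,S)=204$ external edges (and strictly more for size $\geq 4$, since $\omega(K(12,4))=3$ rules out $K_4$ subgraphs). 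Together with the Laplacian upper bound $e(S,K(12,4)\setminus S)\leq \frac{7|S|(495-|S|)}{33}$, this yields the central inequality
\[
134a+66b \;\geq\; 204c-\frac{7|S|(495-|S|)}{33}.
\]
In parallel, Lemma~\ref{KKBounds} applied to the independent set of $a$ singletons yields an upper bound $a\leq a_{\max}(|S|)$, and Lemma~\ref{switch2}, applied with two $K_2$ components as the four distinguished vertices and $Q$ consisting of every singleton together with one endpoint from each $K_2$, forces $a+b\leq \binom{11}{3}-\binom{7}{3}-\binom{6}{2}+2 = 117$ whenever $b\geq 2$.

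The hard part is verifying that this linear system is infeasible for every residual $(c,|S|)$. For $b\geq 2$, one maximizes $134a+66b$ subject to $a\leq a_{\max}(|S|)$ and $a+b\leq 117$, and shows this maximum falls short of the required lower bound. For $b\in\{0,1\}$, the vertex-count inequality $2a+b\geq 3c-(495-|S|)$ extracted from $V_{\mathrm{other}}\geq 3r$ forces $a$ strictly above $a_{\max}(|S|)$, giving an immediate contradiction. A short case check then handles each residual $(c,|S|)$; the tightest subcases are $c=131$ with $|S|$ near $2c$, where the edge inequality is closest to being satisfied but still fails by a comfortable margin.
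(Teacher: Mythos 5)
Your first phase coincides with the paper's: Theorem~\ref{TK} gives $36\le c\le 131$, and Lemma~\ref{minlemma} plus \eqref{HMP2} (with the correct constant $7/375$) eliminates all $c$ up to $129$ whenever the partition lemma applies; your rearranged bound $|S|\ge 7(c-1)(496-c)/(7c+368)$ is a valid and slightly sharper way to run the same computation. You are in fact more scrupulous than the paper about where Lemma~\ref{minlemma} is available: the paper only guarantees its hypothesis for $c\le 123$ and then jumps to ``it remains to check $c=130,131$,'' whereas you correctly flag the residual cases $c\in[124,129]$ with $|S|>495-2c$. Where you genuinely diverge is the endgame. The paper applies the ``two $K_2$'s'' part of Lemma~\ref{switch2} to a transversal consisting of one vertex from \emph{every} component (taking an edge from each of two putative non-singleton components), so that two non-singleton components would force $c\le 117<124$; hence at least $c-1$ components are singletons, and Lemma~\ref{KKBounds} gives a neighborhood of size at least $329>2c\ge |S|$ -- two lines, and it covers your extra cases $c\in[124,129]$ as well, since $KK(123)=329$ already exceeds $258$. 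You instead run an edge-counting feasibility system in the style of the $K(8,3)$ proof ($70a+138b+204(c-a-b)\le 7|S|(495-|S|)/33$, plus $a\le a_{\max}(|S|)$ from Lemma~\ref{KKBounds}, plus $a+b\le 117$ from your weaker application of Lemma~\ref{switch2} to two $K_2$ components, plus vertex counting for $b\le 1$). I spot-checked your system: the $b\ge 2$ branch is infeasible uniformly (the maximum $134a+66b\le 7722+68\cdot 47=10918$ falls below the required $\ge 12302$ for all residual $(c,|S|)$), and the $b\le 1$ branch is killed by $2a+b\ge 3c-(495-|S|)$ versus $a_{\max}\le 47$; so your route works, it is just heavier than the paper's and its final numerics are asserted rather than exhibited.

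One step needs repair. Your uniform lower bound of $204$ external edges for every component of order at least $3$ is justified only by ``$\omega(K(12,4))=3$ rules out $K_4$ subgraphs.'' That suffices for orders $3$ and $4$ (a size-$3$ component must be a triangle by Lemma~\ref{trees}, giving exactly $3\cdot 70-6=204$; a size-$4$ component has at most $5$ internal edges, giving at least $270$), but triangle-freeness of cliques alone caps internal edges only at roughly $t^2/3$ by Tur\'an, and $70t-2t^2/3$ drops below $204$ once $t$ exceeds about $100$, while components in your residual range can have up to roughly $240$ vertices. The fix is already in your toolkit: Lemma~\ref{laplacian} with $\mu_2=55$ gives $e(\C,S)\ge |\C|(495-|\C|)/9\ge 218$ for all $4\le |\C|\le 491$, so the bound of $204$ per non-singleton, non-$K_2$ component is correct once you cite it; without that citation the central inequality $134a+66b\ge 204c-7|S|(495-|S|)/33$ is not established for large components.
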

    \begin{proof}
    Suppose $S$ is a vertex cut of $K(12,4)$ that is not the complement of a maximum independent set such that $\frac{|S|}{c(K(12,4)\setminus S)}\le 2$.
    Let $c=c(K(12,4)\setminus S)$. By Theorem \ref{TK}, $36\leq c\leq 131$. Then $|K(12,4)\setminus S|\geq 495 - 2c$. For $36 \leq c \leq 123$, we have $495 -2c \geq 2c$, from which Lemma \ref{minlemma} gives a vertex partition $S_1,S_2$ with at least $c-1$ vertices in each subset. Therefore
    $$|S| \geq \frac{7}{375} |S_1||S_2| \geq \frac{7}{375} (c-1)\left( 495 -2c - (c-1) \right) = \frac{7}{375} (c-1)(496-3c). $$
    For $36 \leq c \leq 129$, this lower bound contradicts $|S|\leq 2c$. It is left to check when $c=130,131$. By Lemma \ref{switch2}, at most one component of $K(12,4)\setminus S$ is not a singleton. There are at least $129$ singletons in $K(12,4)\setminus S$, whose neighborhood must lie in $S$. By Lemma \ref{KKBounds}, the neighborhood contains at least $329$ vertices, contradicting $|S|\leq 2c\leq 262$.
    \end{proof}
    
    For $K(11,4)$, we will need a vertex of high degree to form an independent set large enough to use Lemma \ref{switch2}.
    
    \begin{thm}
    We have $t(K(11,4))=\frac{7}{4}$. Moreover, if $S$ is a vertex cut of $K(11,4)$ such that $\frac{|S|}{c(K(11,4)\setminus S)} = \frac{7}{4}$,  then $S$ is the complement of a maximum independent set of $K(11,4)$.
    \end{thm}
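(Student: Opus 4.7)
The plan follows the template of the preceding results in this section. Assume, for contradiction, that $S$ is a vertex cut of $K(11,4)$ with $|S|/c \le 7/4$ that is not the complement of a maximum independent set, where $c := c(K(11,4)\setminus S)$. Theorem \ref{TK} confines $c \in [21, 101]$, and Lemma \ref{toughone} gives $|S| \ge c + 1$, so $c + 1 \le |S| \le \lfloor 7c/4 \rfloor$.

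For the middle range $21 \le c \le 88$, the bound $|S| \le 7c/4$ forces $|K(11,4)\setminus S| \ge 2c$, so Lemma \ref{minlemma} produces a partition of $K(11,4)\setminus S$ into two subsets $S_1, S_2$ with no edges between them and $\min(|S_1|,|S_2|) \ge c - 1$. The specialization of \eqref{HMP2} to $n = 11$ reads $|S| \ge \frac{1}{54}|S_1||S_2|$; combined with $|S_1||S_2| \ge (c-1)(330 - |S| - (c-1))$ and $|S| \le 7c/4$, this reduces to the inequality $11c^2 - 957c + 1324 \ge 0$, which fails throughout $21 \le c \le 85$. This yields a contradiction and leaves only $86 \le c \le 101$.

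For the upper range, the first goal is to show that $K(11,4)\setminus S$ has at most one non-singleton component. If two non-singleton components $\mathcal{C}_1, \mathcal{C}_2$ existed, we would pick edges $x_1 x_4 \in \mathcal{C}_1$ and $x_2 x_3 \in \mathcal{C}_2$ and form $Q = \{x_1, x_3\} \cup (\text{singletons}) \cup (\text{one chosen vertex per remaining non-singleton component})$. Then $\{x_1,x_2,x_3,x_4\}$ induces two disjoint $K_2$'s, $N(\{x_1,x_2,x_3,x_4\}) \cap Q = \emptyset$, and $|Q| = c$, so Lemma \ref{switch2} forces $c \le 92$. This is already contradictory for $c \ge 93$. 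For the sliver $86 \le c \le 92$, one works harder: the Kneser structure ensures that any two $K(11,4)$-neighbors of a common vertex must intersect, so inside a non-singleton component with enough vertices one can locate a ``vertex of high degree'' whose neighborhood yields an induced $C_4$; the $C_4$-form of Lemma \ref{switch2} applied to an enlarged $Q$ of size $c + 1 > 92$ then delivers the contradiction. The residual configurations (in which every non-singleton component is a $K_2$ or a biconnected subgraph of girth $\ge 5$) are eliminated by pairing the edge count $e(S, K(11,4)\setminus S) \le 35|S|$ with Mohar's lower bound from Lemma \ref{laplacian}.

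Once ``at most one non-singleton component'' is secured, there are at least $c - 1 \ge 85$ singletons in $K(11,4)\setminus S$, whose neighborhood lies entirely in $S$. Applying Lemma \ref{KKBounds} and performing the Kruskal--Katona expansion gives $|N(\text{singletons})| \ge 203$ already at $85$ singletons (and the bound is non-decreasing in the singleton count), while $|S| \le \lfloor 7c/4 \rfloor \le 176$ throughout $86 \le c \le 101$; this is the final contradiction. The main obstacle is the band $86 \le c \le 92$ with multiple non-singleton components, where the two-$K_2$ invocation of Lemma \ref{switch2} yields only $|Q| \le 92$ without strict violation: locating a ``vertex of high degree'' to produce an induced $C_4$ and, where necessary, falling back on Mohar's bound are required to finish off this case before the Kruskal--Katona argument can be deployed.
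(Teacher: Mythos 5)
Your overall architecture matches the paper's: Theorem \ref{TK} plus the Hoffman--Mohar-type bound \eqref{HMP2} with Lemma \ref{minlemma} to kill $21\le c\le 85$, then a Hilton--Milner/Han--Kohayakawa argument (Lemma \ref{switch2}) to force almost all components to be singletons, and finally Kruskal--Katona (Lemma \ref{KKBounds}) against $|S|\le \frac74 c<177$. Your handling of $c\ge 93$ is fine and even slightly more direct than the paper's: a transversal $Q$ of the components has $|Q|=c\ge 93>92$, so two non-singleton components would violate the two-$K_2$ case of Lemma \ref{switch2}.

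The genuine gap is exactly the band $86\le c\le 92$ that you flag as ``the main obstacle.'' There your transversal only has size $c\le 92$, which does not exceed the threshold $\binom{10}{3}-\binom{6}{3}-\binom{5}{2}+2=92$ of Lemma \ref{switch2}, and your proposed repair does not close it: an ``enlarged $Q$ of size $c+1$'' satisfies $c+1>92$ only when $c\ge 92$, so for $86\le c\le 91$ adding a single vertex (or a single induced $C_4$) is not enough, and you give no mechanism guaranteeing how many extra independent vertices can be adjoined. The paper's resolution is quantitative rather than structural: writing $d$ for the maximum degree inside a component, every vertex of $K(11,4)\setminus S$ sends at least $35-d$ edges into $S$, so $(35-d)\,|K(11,4)\setminus S|\le e(S,K(11,4)\setminus S)\le \frac{55|S|(330-|S|)}{330}$ by Lemma \ref{laplacian}, which yields $d\ge 35-\frac{|S|}{6}\ge 35-\frac{7}{24}c$. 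Since $K(11,4)$ is triangle-free, the neighborhood $N_{\C}(v)$ of a maximum-degree vertex $v$ is independent, and $Q=N_{\C}(v)\cup H$ (with $H$ a transversal of the other components) is an independent set of size $d+c-1\ge 34+\frac{17}{24}c\ge 94>92$ for all $c\ge 86$; this is what lets Lemma \ref{switch2} forbid two further non-singleton components throughout the whole range. You mention ``falling back on Mohar's bound'' but never extract this degree estimate, and your description of the residual configurations as ``biconnected subgraphs of girth $\ge 5$'' is off in any case, since $K(11,4)$ has girth $4$. Without the degree bound (or some substitute), the cases $86\le c\le 91$ remain open.
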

    \begin{proof}
    Suppose $S$ is a vertex cut of $K(11,4)$ that is not the complement of a maximum independent set such that $\frac{|S|}{c(K(11,4)\setminus S)}\leq \frac{7}{4}$.
    Let $c=c(K(11,4)\setminus S)$. By Theorem \ref{TK}, $21\leq c\leq 101$. Then $|K(11,4)\setminus S|\geq 330-\frac{7}{4}c$.
    For $21\leq c\leq 85$, we have $330-\frac{7}{4}c\geq 2c$, from which Lemma \ref{minlemma} gives a vertex partition $S_1,S_2$ with at least $c-1$ vertices in each subset. Therefore
    \[|S| \geq \frac{1}{54} |S_1||S_2| \geq \frac{1}{54} ( c-1 )\left( 330 - \frac74 c - (c-1) \right) = \frac{1}{54} (c-1)\left(331-\frac{11}{4} c\right). \]
    For $21\leq c\leq 85$, this lower bound of $|S|$ contradicts  $|S|\leq\frac{7}{4}c$. It is left to check $86\leq c\leq 101$. Let $X_1,\ldots, X_c$ denote the components of $K(11,4)\setminus S$. Let $\Delta(X_i)$ denote the maximum degree of the induced subgraph of component $X_i$ and $d=\max_{i\in[c]}\Delta(X_i)$. Then each vertex in $K(11,4)\setminus S$  contributes at least $35-d$ edges into $S$. Thus,
    \[e(S,K(11,4)\setminus S)\geq (35-d)|K(11,4)\setminus S|.\]
    From Lemma \ref{laplacian}, we also have the upper bound
    \[e(S,K(11,4)\setminus S)\leq \dfrac{55|S|(330-|S|)}{330}.\]
    Combining the two bounds on $e(S,K(11,4)\setminus S)$ and $|S|\leq \frac{7}{4}c$ gives
    \begin{equation}
    d\geq 35-\dfrac{|S|}{6}\geq35-\frac{7}{24}c.\label{11,4:dbound}
    \end{equation}
    
    Let $v$ be a vertex of degree $d$ in component $\C$. Since $K(11,4)$ is triangle-free, the neighborhood $N_{\C}(v)$ within component $\C$ forms an independent set of vertices. Let $H$ be an independent set formed by taking a vertex from each component except $\C$. Then $N_{\C}(v)\cup H$ is an intersecting family of $d+c-1$ vertices. By \eqref{11,4:dbound} and $c\geq 86$,
    \[|N_{\C}(v)\cup H|=d+c-1\geq 34+\frac{17}{24}c\geq 94,\]
    which exceeds the bound given by Lemma \ref{switch2}, so by the moreover part of the lemma, there cannot exist two components other than $\C$ that are not singletons. Then at most one component other than $\C$ is not a singleton. Thus $K(11,4)\setminus S$ contains at least $c-2\geq 84$ singletons whose neighborhood must be contained in $S$. By Lemma \ref{KKBounds}, there are at least $203$ vertices in the neighborhood, a contradiction with $|S|\leq \frac{7}{4}c< 177$.
    \end{proof}
    
    For $K(10,4)$, we use a similar edge counting argument from $K(7,3)$.
    
    \begin{thm}
    We have $t(K(10,4))=\frac{3}{2}$. Moreover, if $S$ is a vertex cut of $K(10,4)$ such that $\frac{|S|}{c(K(10,4)\setminus S)} = \frac{3}{2}$,  then $S$ is the complement of a maximum independent set of $K(10,4)$.
    \end{thm}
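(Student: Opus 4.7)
The plan mirrors the arguments used for $K(7,3)$, $K(11,4)$, and $K(12,4)$: exploit the spectral partition bound \eqref{HMP2} to eliminate small values of $c = c(K(10,4)\setminus S)$, then run an edge-counting argument paired with a computer search to handle the rest. Fix a vertex cut $S$ with $|S|/c \leq 3/2$ that is not the complement of a maximum independent set. By Theorem \ref{TK}, $11 \leq c \leq 75$, and combining Lemma \ref{toughone} with the toughness hypothesis yields $c+1 \leq |S| \leq \frac{3}{2}c \leq 112$. The graph $K(10,4)$ is $15$-regular on $210$ vertices and triangle-free, with Laplacian spectrum $\{0,9,14,18,25\}$.

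For the small-$c$ portion, $|K(10,4)\setminus S| \geq 210 - \frac{3}{2}c \geq 2c$ whenever $c \leq 60$, so Lemma \ref{minlemma} produces a partition $S_1 \cup S_2$ of $K(10,4)\setminus S$ with no edges between them and $\min(|S_1|,|S_2|) \geq c-1$. Substituting into \eqref{HMP2} (the $n=10$ coefficient is $\frac{15}{896}$) gives $|S| \geq \frac{15}{896}(c-1)\bigl(211 - \frac{5}{2}c\bigr)$, which exceeds $\frac{3}{2}c$ for $c$ up to roughly $47$, closing that portion.

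For the remaining values I would carry out an edge-counting argument modeled on the $K(7,3)$ proof. Triangle-freeness and Mantel's theorem give $e(\C) \leq \lfloor t^2/4 \rfloor$ for an induced subgraph on $t$ vertices, hence $e(\C, S) \geq 15t - 2\lfloor t^2/4 \rfloor$, which beats the lower bound from Lemma \ref{laplacian} for $t \leq 13$; for $t \geq 14$ the Laplacian bound $\lceil \frac{3t(210-t)}{70}\rceil$ takes over. Package these into a piecewise lower bound $f_1(t)$, with the small values $f_1(1)=15$, $f_1(2)=28$, $f_1(4)=52$ recorded explicitly; size $t=3$ is forbidden because Lemma \ref{trees} requires biconnectivity and $K(10,4)$ is triangle-free. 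Then $\sum_i f_1(p_i) \leq \min\bigl(15|S|, \lfloor \frac{5|S|(210-|S|)}{42} \rfloor\bigr)$ by Lemma \ref{laplacian}, and if $a$ components are singletons, Lemma \ref{KKBounds} yields a Kruskal--Katona lower bound on their shared neighborhood in $S$ that further restricts feasible values of $a$.

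A computer search then enumerates, for each $(c,|S|)$ in the residual range, integer partitions of $210 - |S|$ into $c$ parts of admissible sizes, and verifies that no partition satisfies simultaneously the edge-count inequality and the singleton constraint. The hardest step will be calibrating these bounds tightly enough to keep the partition space manageable near $c \approx 60$--$75$, where both the HMP2 and Laplacian bounds weaken; leaning on the Kruskal--Katona restriction on singletons and the Mantel-vs-Laplacian crossover is essential. Structurally the problem is identical to the one solved for $K(7,3)$, so once the bounds are assembled the conclusion follows.
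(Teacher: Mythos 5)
Your proposal follows essentially the same route as the paper's proof: Theorem \ref{TK} and Lemma \ref{toughone} to bound $c$ and $|S|$, then Lemma \ref{minlemma} with \eqref{HMP2} (coefficient $\tfrac{15}{896}$) to eliminate $11\le c\le 47$, and finally the degree/Laplacian edge-counting bounds on components combined with the Kruskal--Katona neighborhood constraint and a computer search over partitions for $48\le c\le 75$. The only cosmetic difference is that you use Mantel's theorem to bound $e(\C)$ for components of size up to $13$, whereas the paper only does this explicitly for sizes $4$ and $5$ before switching to the Laplacian bound; your version is valid (and slightly stronger), so the argument goes through as in the paper.
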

    
    \begin{proof}
    Suppose $S$ is a vertex cut of $K(10,4)$ achieving toughness such that $S$ is not the complement of a maximum independent set and $\frac{|S|}{c(K(10,4)\setminus S)}\leq \frac{3}{2}$.
    Let $c=c(K(10,4)\setminus S)$. By Theorem \ref{TK}, $11\leq c\leq 75$. It follows that, $|K(10,4)\setminus S|\geq 210 - \frac{3}{2}c$.
    For $11\leq c\leq 47$, we have $210-\frac{3}{2}c\geq 2c$, from which Lemma \ref{minlemma} gives a vertex partition $S_1,S_2$ with at least $c-1$ vertices in each subset. Therefore
    \[|S|\geq \dfrac{15}{896}|S_1||S_2|\geq \dfrac{15}{896}(c-1)\left(210-\dfrac{3}{2}c-(c-1)\right)=\dfrac{15}{896}(c-1)\left(211-\dfrac{5}{2}c\right).\]
    For $11\leq c\leq 47$, this lower bound of $|S|$ contradicts  $|S|\leq\frac{3}{2}c$. It is left to check $48\leq c\leq 75$. Since $K(10,4)$ is $15$-regular, we have that $e(S,K(10,4)\setminus S)\leq 15|S|$. By Lemma \ref{laplacian}, we may improve this upper bound to
    \begin{equation}\label{10,4:ehigh}
        e(S,K(10,4)\setminus S)\leq \min\left(15|S|,\left\lfloor\dfrac{25|S|(210-|S|)}{210}\right\rfloor\right).
    \end{equation}
    Now for any component $\C$ of $K(10,4)\setminus S$, we will bound $e(\C,S)$ from below. By Lemma \ref{laplacian}, we have
    \begin{equation*}\label{10,4:xhigh}
        e(\C,S)\geq \dfrac{9|\C|(210-|\C|)}{210}.
    \end{equation*}
    We will require tighter bounds for small $|\C|$. When $|\C|=1,2$, we obtain $e(\C,S)=15,28$. Since $K(10,4)$ has girth 4 \cite{girth_ref}, if $|\C|=3$, then $\C$ is a tree violating Lemma \ref{trees}. For $|\C|=4,5$, the maximum of $e(\C)$ will be 4 and 6, respectively. Since $K(10,4)$ is $15$-regular,
    \[e(\C,S)=15|\C|-2e(\C).\]
    Thus $e(\C,S)\geq 52, 63$ for $|\C|=4,5$. We combine this into a single function $f_2(|\C|)$ which provides a lower bound for $e(\C,S)$. Define
    \begin{equation}\label{10,4:f}
    f_2(t)=\begin{cases}
    15 & \text{if }t=1\\
    28 & \text{if }t=2\\
    52 & \text{if }t=4\\
    63 & \text{if }t=5\\
    \ceil{\frac{9t(210-t)}{210}} & \text{if }t\ge 6
    \end{cases}.\end{equation}
    Let $X_1,\ldots,X_c$ denote the components of $K(10,4)\setminus S$. Let $p_i=|X_i|$, and without loss of generality assume $p_1\leq \cdots \leq p_c$. By \eqref{10,4:ehigh} and \eqref{10,4:f}, $(p_1,\ldots,p_c)$ is a partition in $\mathcal{P}(210-|S|,c,\{3\})$ satisfying
    \begin{equation}\label{10,4:check}\sum_{i=1}^c f_2(p_i)\leq \sum_{i=1}^c e(X_i,S) = e(S,K(10,4)\setminus S)\leq \min\left(15|S|,\left\lfloor\dfrac{25|S|(210-|S|)}{210}\right\rfloor\right).\end{equation}
    Let $a$ and $b$ be the number of components of size $1$ and $2$ respectively. Again, the size of the neighborhood of the $a$ singletons and $b$ $K_2$'s must be at most $|S|$. A computer search in Section \ref{Appendix:p} over all possible values of $c$ and $|S|$ confirms that no partitions satisfy these two conditions among other trivial conditions on $a,b$.
    \end{proof}
    
    
    \begin{thm}
    We have $t(K(9,4))=\frac{5}{4}$. Moreover, if $S$ is a vertex cut of $K(9,4)$ such that $\frac{|S|}{c(K(9,4)\setminus S)} = \frac{5}{4}$,  then $S$ is the complement of a maximum independent set of $K(9,4)$.
    \end{thm}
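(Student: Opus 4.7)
The plan is to mirror the edge-counting argument from the proofs for $K(7,3)$ and $K(10,4)$. I suppose for contradiction that $S$ is a vertex cut of $K(9,4)$ not equal to the complement of a maximum independent set and satisfying $\frac{|S|}{c(K(9,4)\setminus S)}\leq\frac{5}{4}$, aiming to derive a contradiction via a partition analysis on the components of $K(9,4)\setminus S$. Setting $c=c(K(9,4)\setminus S)$, Theorem \ref{TK} gives $5\leq c\leq 53$, and Lemma \ref{toughone} combined with $|S|\leq\frac{5}{4}c$ yields $c+1\leq |S|\leq \lfloor 5c/4\rfloor$. Here $K(9,4)$ is $5$-regular on $126$ vertices and by Theorem \ref{knesereigenvalue} has adjacency eigenvalues $5,3,1,-2,-4$, so the nonzero Laplacian eigenvalues lie in $[2,9]$.

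A first observation is that the HMP coefficient $\frac{6(n-8)(n-5)}{(n-3)(n-2)^3}=\frac{4}{343}$ in \eqref{HMP2} is too small for the spectral-plus-Lemma-\ref{minlemma} approach to yield a direct contradiction: across the range $5\leq c\leq 38$ where Lemma \ref{minlemma} applies (the condition $126-|S|\geq 2c$ holds for $c\leq 38$), the resulting lower bound $|S|\geq \tfrac{4(c-1)(127-c)}{339+4c}$ never reaches $\tfrac{5}{4}c$. So I would proceed entirely by edge counting, reusing the template from the $K(7,3)$ proof.

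For the upper bound on $e(S,K(9,4)\setminus S)$, Lemma \ref{laplacian} together with $5$-regularity gives
\[
e(S,K(9,4)\setminus S)\leq \min\!\left(5|S|,\,\left\lfloor\frac{9|S|(126-|S|)}{126}\right\rfloor\right).
\]
For the component-wise lower bound on $e(\C,S)$, I use that $K(9,4)\cong O_5$ has girth $6$: Lemma \ref{trees} rules out components of orders $3$, $4$, and $5$ (each would be a tree), while a biconnected graph on $6$ or $7$ vertices with girth at least $6$ must be exactly a cycle of that length (an ear or chord would force a cycle shorter than $6$), so $e(\C,S)=18$ or $21$. For $|\C|\geq 8$, Lemma \ref{laplacian} applied to the induced subgraph yields $e(\C,S)\geq\lceil 2|\C|(126-|\C|)/126\rceil$. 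Assembling these bounds into a piecewise function in the style of \eqref{7,3:f} and \eqref{10,4:f}, any partition $(p_1,\ldots,p_c)\in\mathcal{P}(126-|S|,c,\{3,4,5\})$ of component orders must satisfy the resulting inequality $\sum_i f(p_i)\leq e(S,K(9,4)\setminus S)$.

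To further restrict the search, if $a$ is the number of singletons and $b$ the number of $K_2$ components, then the singletons form an independent set whose neighborhood is contained in $S$, so Lemma \ref{KKBounds} gives a lower bound on $|S|$ in terms of $a$; taking one endpoint from each $K_2$ together with all singletons then gives an independent set to which Lemma \ref{switch2} applies, constraining $a+b$. A computer search paralleling Section \ref{Appendix:p}, over all $(c,|S|)$ with $5\leq c\leq 53$ and $c+1\leq|S|\leq\lfloor 5c/4\rfloor$ and all admissible partitions, would then verify that no configuration is consistent with all the constraints. The principal obstacle is that, unlike for $n\geq 10$, the HMP bound cannot eliminate any initial range of $c$, so the edge-counting search must cover the full interval $5\leq c\leq 53$; handling the large-$c$ end, where partitions containing several large biconnected components must be enumerated, is the most delicate part of the implementation.
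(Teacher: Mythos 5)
Your overall strategy is the paper's: rule out the spectral route, then do an edge-counting partition search over all $(c,|S|)$ with component-wise lower bounds on $e(\C,S)$, Kruskal--Katona neighborhood constraints on the singletons, and a computer verification that no feasible partition exists. Your preliminary observations are all correct: the range $5\le c\le 53$, the bound $c+1\le|S|\le\lfloor 5c/4\rfloor$, the Laplacian eigenvalues $\{0,2,4,7,9\}$, the failure of the bound \eqref{HMP2} (whose coefficient is $\tfrac{4}{343}$) throughout $5\le c\le 38$, and the identification of order-$6$ and order-$7$ biconnected girth-$6$ components as $C_6$ and $C_7$, giving $e(\C,S)=18,21$ exactly as in \eqref{9,4:f}.

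The gap is in your lower bound on $e(\C,S)$ for $8\le|\C|\le 16$. You propose the generic bound $\lceil 2|\C|(126-|\C|)/126\rceil$ from Lemma \ref{laplacian}, which gives $15,17,19,\ldots,28$ for $|\C|=8,\ldots,16$; the paper's function $f_3$ uses $22,25,26,27,28,29,28,31,32$ on that range, obtained by exhaustively generating all biconnected graphs of maximum degree $5$ and girth at least $6$ on up to $16$ vertices and bounding $e(\C)$ from above (Section \ref{Appendix:e}), then using $e(\C,S)=5|\C|-2e(\C)$. The discrepancy is large (e.g.\ $15$ versus $22$ at $|\C|=8$), and since the entire conclusion rests on the assertion that the search finds \emph{no} feasible partition, you cannot claim this outcome with the weaker bounds unless you actually run the search and it still comes up empty; the paper explicitly states that tighter bounds are required for small $|\C|$, and near-extremal configurations (many singletons plus one component of order $8$--$16$) sit close to the boundary of the edge-count inequality \eqref{9,4:check}. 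To close the gap you should either reproduce the graph-generation step to recover the stronger values of $f_3$, or exhibit a run of the search with your weaker $f$ that still terminates with no output. A secondary, minor point: your appeal to Lemma \ref{switch2} to constrain $a+b$ is not quite in the form the lemma requires (it needs two full $K_2$ components, or an induced $4$-cycle, with the appropriate vertices in the independent set); the paper instead folds the $K_2$ endpoints into the Kruskal--Katona condition \eqref{Appendix:n2}, which is the cleaner way to use them here.
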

    
    \begin{proof}
    Suppose $S$ is a vertex cut of $K(9,4)$ achieving toughness such that $S$ is not the complement of a maximum independent set and $\frac{|S|}{c(K(9,4)\setminus S)}\leq\frac{5}{4}$.
    Let $c=c(K(9,4)\setminus S)$. By Theorem \ref{TK}, $5\leq c\leq 53$. By Lemma \ref{laplacian} and the regularity of $K(9,4)$,
    \begin{equation}\label{9,4:ehigh}
        e(S,K(9,4)\setminus S)\leq \min\left(5|S|,\left\lfloor\dfrac{9|S|(126-|S|)}{126}\right\rfloor\right).
    \end{equation}
    Now for any component $\C$ of $K(10,4)\setminus S$, Lemma \ref{laplacian} implies
    \[
        e(\C,S)\geq \dfrac{2|\C|(126-|\C|)}{126}.
    \]
    We will require tighter bounds on $e(\C,S)$ for small $|\C|$. By Lemma \ref{trees}, $\C$ may not be a tree. Since $K(9,4)$ has girth $6$ \cite{girth_ref}, we do not consider when $3\leq |\C|\leq 5$. For $6\leq|\C|\leq 16$, the bounds on $e(\C,S)$ were found by the computer search in Section \ref{Appendix:e} on all possible components of order $|\C|$. Define
    \begin{equation}\label{9,4:f}
    f_3(t) = \begin{cases}
    5 & \text{if }t=1\\
    8 & \text{if }t=2\\
    18 & \text{if }t=6\\
    21 & \text{if }t=7\\
    22 & \text{if }t=8\\
    25 & \text{if }t=9\\
    26 & \text{if }t=10\\
    27 & \text{if }t=11\\
    28 & \text{if }t=12\\
    29 & \text{if }t=13\\
    28 & \text{if }t=14\\
    31 & \text{if }t=15\\
    32 & \text{if }t=16\\
    \ceil{\frac{2t(126-t)}{126}} & \text{if } t\ge 17
    \end{cases}.\end{equation}
    
    Let $X_1,\ldots,X_c$ denote the components of $K(9,4) \setminus S$. Let $p_i=|X_i|$ and without loss of generality assume $p_1\leq \cdots \leq p_c$. By \eqref{9,4:ehigh} and \eqref{9,4:f}, $(p_1,\ldots,p_c)$ is a partition in $\mathcal{P}(126-|S|,c,\{3,4,5\})$ satisfying
    \begin{equation}\label{9,4:check}\sum_{i=1}^c f_3(p_i)\leq \sum_{i=1}^c e(X_i,S)=e(S,K(9,4)\setminus S)\leq \min\left(5|S|,\left\lfloor\dfrac{9|S|(126-|S|)}{126}\right\rfloor\right).\end{equation}
    Let $a$ and $b$ be the number of components of size $1$ and $2$ respectively. Again, the neighborhood of the $a$ singletons and $b$ $K_2$'s must be at most $|S|$. A computer search in Section \ref{Appendix:p} over all possible values of $c$ and $|S|$ confirms that no partitions satisfy these two conditions among other trivial conditions on $a,b$.
    \end{proof}
    
    \section{Final Remarks}
    
    In this paper, we show that the toughness of the Kneser graph $K(n,k)$ equals $n/k-1$ for $k\in \{3,4\}$ and any $n\geq 2k+1$ and for given $k\geq 5$ and sufficiently large $n$ (as a function of $k$). We conjecture that this result holds for any $k\geq 5$ and $n\geq 2k+1$. It would be interesting to determine the toughness of other families of regular graphs such as the Johnson graphs, Paley graphs, block graphs of Steiner systems, and $q$-analogs of Kneser graphs.
    

    \section{Appendix}
    See \url{https://github.com/aostuni/kneser-toughness} for source files, including program outputs.
    
    \subsection{Calculation of lower bounds of \texorpdfstring{$e(\C,S)$}{e(C,S)}.}\label{Appendix:e}
    For $6\leq |\C|\leq 16$, we rely on lower bounds of $e(\C,S)$ where $\C$ is a component of $K(9,4)$ to create the function $f_3$ in \eqref{9,4:f}. Here, we describe in detail how these values were determined.
    
    With geng, we generate biconnected graphs with maximum degree $5$ and girth at least $5$ (the geng tool does not allow generation of graphs of girth above 5). With countg, we remove graphs of girth $5$ and find an upper bound on $e(\C)$.
    \begin{verbatim}
    $ geng -CtfD5 p | countg -g6: --e
    \end{verbatim}
    The total execution time took at most $15$ seconds on our machine. Upper bounds on $e(\C)$ are shown below.
    \[
    \begin{array}{|c|c|}
    \hline
    |\C| & e(\C)\\\hline
    6 & 6\\\hline
    7 & 7\\\hline
    8 & 9\\\hline
    9 & 10\\\hline
    10 & 12\\\hline
    11 & 14\\\hline
    12 & 16\\\hline
    13 & 18\\\hline
    14 & 21\\\hline
    15 & 22\\\hline
    16 & 24\\\hline
    \end{array}
    \]
    Lower bounds on $e(\C,S)$ can be calculated using $e(\C,S)=5|\C|-2e(\C)$.
    
    \subsection{Partition searching algorithm}\label{Appendix:p}
    We describe the algorithm used to check the partitions of vertices into the $c$ components for a vertex cut $S$ of each of $K(7,3)$, $K(10,4)$, and $K(9,4)$. Denote $g$ and $\ell$ to be the girth and degree of $K(n,k)$ respectively. Recall the functions $f_1$, $f_2$, and $f_3$ defined for each $K(7,3)$, $K(10,4)$, and $K(9,4)$, respectively. Let $f$ denote this function for general $n,k$. In our implementation, we store values of $f$ in an array up to a max component size of
    \[|K(n,k)\setminus S|-(c-1)\leq \binom{n}{k}-|S|-c+1\leq \binom{n}{k}-2c.\] The last inequality follows by Lemma \ref{toughone}. Let $L$ and $U$ be the lower and upper bounds of the inequality on $e(S, K(n,k)\setminus S)$, namely \eqref{7,3:check}, \eqref{10,4:check}, and \eqref{9,4:check}. For general $n,k$, define
    \begin{align}
        &L((p_1\ldots,p_c))=\sum_{i=1}^c f(p_i),\\
        &U(|S|)=\min\left(\binom{n-k}{k}|S|,\left\lfloor\dfrac{\left(\binom{n-k}{k}+\binom{n-k-1}{k-1}\right)|S|\left(\binom{n}{k}-|S|\right)}{\binom{n}{k}}\right\rfloor\right).
    \end{align}
    Let $a$ and $b$ be the number of singletons and $K_2$'s respectively in $K(n,k)\setminus S$. The next largest component size must be $g$ by Lemma \ref{trees}. By counting the components and vertices in $K(n,k)\setminus S$, we immediately have
    \begin{align}
        a+b&\leq c,\label{Appendix:c}\\
        a+2b+g(c-a-b)&\leq |K(n,k)\setminus S|=\binom{n}{k}-|S|.\label{Appendix:v}
    \end{align}
    Now count the edges coming out of each component. Trivially $f(1)=\ell$ and $f(2)=2\ell-2$. By inspecting the values of $f$ for each of $K(7,3)$, $K(10,4)$, and $K(9,4)$, we find that $f(g)\leq f(t)$ for all $g\leq t\leq \binom{n}{k}-2c$. Thus,
    \begin{equation}\label{Appendix:e1}
        \ell a+(2\ell-2)b+f(g)(c-a-b) \leq e(S,K(n,k)\setminus S)\leq U(|S|).
    \end{equation}
    More generally, for a partition $p=(p_1,\ldots,p_c)$ of the $K(n,k)\setminus S$ vertices into $c$ components,
    \begin{equation}\label{Appendix:e2}
        L(p) \leq e(S,K(n,k)\setminus S)\leq U(|S|).
    \end{equation}
    Note \eqref{Appendix:e2} is exactly \eqref{7,3:check}, \eqref{10,4:check}, and \eqref{9,4:check}. Finally, we count the size of the induced neighborhood of the singletons and $K_2$'s in $S$. First, the neighborhood of any $r$ of the $a$ singletons lie in $S$. Define $KK(|Q|)$ to be the lower bound of $N(Q)$ in Lemma \ref{KKBounds}. Then for all $1\leq r\leq a$,
    \begin{equation}\label{Appendix:n1}
        KK(r)\leq |S|.
    \end{equation}
    Let $H$ be the set of $a$ singletons. For $1\leq r\leq b$, choose a vertex from $r$ of the $b$ $K_2$'s, and add these to $H$. Now the neighborhood of $H$ consists of $r$ vertices in $K(n,k)\setminus S$ and vertices in $S$. For all $1\leq r\leq b$,
    \begin{equation}\label{Appendix:n2}
        KK(a+r)-r\leq |S|.
    \end{equation}
    Finally, we present our algorithm for finding all partitions for some $c$ which satisfy all the conditions above. We iterate this algorithm over all possible values of $c$ from the spectral bounds.
    
    \begin{algorithm}[H]
    \caption{Partition algorithm for $K(n,k)$}
    \begin{algorithmic}[1]
    \Procedure{PartitionSearch}{$c$}
    \For{$|S|=c+1$ \To $\floor{\left(\frac{n}{k}-1\right)c}$}
        \ForAll{$a,b\geq 0$ \textbf{satisfying} \eqref{Appendix:c}, \eqref{Appendix:v}, \eqref{Appendix:e1}, \eqref{Appendix:n1}, \eqref{Appendix:n2}}
            \ForAll{$p\in \mathcal{P}(\binom{n}{k}-|S|-a-2b,c-a-b,\{1,\ldots,g-1\})$}
                \If{$L(p)+da+(2d-2)b\leq U(|S|)$}
                    \State \textbf{print }$(1^a,2^b)\cup p$
                \EndIf
            \EndFor
        \EndFor
    \EndFor
    \EndProcedure
    \end{algorithmic}
    \end{algorithm}
    
    Note the check in line $5$ is exactly condition \eqref{Appendix:e2}. The generation of partitions with minimum part $g$ can be easily implemented by a slight modification to Knuth's co-lexicographic partition generation algorithm \cite{Knuth}. The algorithm took less than a second to run on our machine for each of $K(7,3)$, $K(10,4)$, and $K(9,4)$. No partitions were found.
    
    \section{Acknowledgements}
    The authors thank the anonymous reviewers, whose thorough comments and feedback have greatly improved the paper. Davin Park was supported by the Combinatorics and Algorithms for Real Problems REU at the University of Maryland, College Park, under NSF grant CNS-1560193. Sebastian Cioab\u{a} is partially supported by NSF grants DMS-1600768 and CIF-1815922. Tanay Wakhare is supported by an MIT Television and Signal Processing Fellowship.
    
    
    \bibliographystyle{plain}
    \bibliography{bibliography}
    \end{document}